\numberwithin{equation}{section}
\newcommand{\eqa}{\begin{eqnarray}}
\newcommand{\ena}{\end{eqnarray}}
\newcommand{\eq}{\begin{equation}}
\newcommand{\en}{\end{equation}}
\newcommand{\eqs}{\begin{eqnarray*}}
\newcommand{\ens}{\end{eqnarray*}}
\def\r{\varrho} 
\newcommand{\R}     {\mathbb{R}} 
\newcommand{\N}     {\mathbb{N}}
\newcommand{\prob}[1]{\bbP\left({#1}\right)}
\newcommand{\expected}[1]{\bE\left[{#1}\right]}
\def\1{{\mathchoice {1\mskip-4mu\mathrm l}      % Blackboard bold 1 
{1\mskip-4mu\mathrm l} 
{1\mskip-4.5mu\mathrm l} {1\mskip-5mu\mathrm l}}} 
\def\comment#1{} 
\newtheorem{theorem}{Theorem}[section] 
\newtheorem{lemma}[theorem]{Lemma}
\newtheorem*{definition}{Definition}
\newcommand{\heap}[2]{\genfrac{}{}{0pt}{}{#1}{#2}} 
\newcommand{\s}{\sigma}
\renewcommand{\d}{{\rm d}} 
\newcommand{\eps}{\varepsilon}
\newcommand{\Bcal}  {{\mathcal B}}
\newcommand{\Ccal}   {{\mathcal C }}
\newcommand{\Gcal}   {{\mathcal G }}
\newcommand{\Jcal}   {{\mathcal J }} 
\newcommand{\Tcal}   {{\mathcal T }} 
\def\ignore#1{}
\def\Def{=}
\def\bbP{\mathbb{P}}
\def\bE{\mathbf{E}}
\def\parent#1{#1^{-}}
\def\Ma{{\rm Mass}}
\def\MMa{{\rm AM}}
\def\We{{\rm Weight }}
\def\T{{{\rm Tree}_{r}}}
\def\R{{{\rm RAN}_t}}
\renewcommand{\emph}{\textit}
\begin{document}

% ***************************************************************```
% TITLE PAGE
% ***************************************************************
\thispagestyle{empty}
\def\thefootnote{\fnsymbol{footnote}}
\title{
Longest paths in random Apollonian networks 
and largest $r$-ary subtrees of random  $d$-ary recursive trees}
\author{
Andrea Collevecchio\thanks{Supported by ARC Discovery Project grant DP140100559.}\\
{\small School of Mathematical Sciences, Monash University, and}\\
{\small Ca' Foscari University, Venice}\\
{\small {\tt  andrea.collevecchio@monash.edu }} 
\and
Abbas  Mehrabian\\
{\small Department of Combinatorics and Optimization, University of Waterloo}\\
{\small {\tt amehrabi@uwaterloo.ca}}
\and
Nick Wormald\thanks{Supported by Australian Laureate Fellowships grant FL120100125.}\\
{\small School of Mathematical Sciences, Monash University} \\
{\small {\tt  nick.wormald@monash.edu }} }
\date{}

\maketitle

\vspace{-0.8cm} 
\noindent {\bf Abstract.} 
Let $r$ and $d$ be positive integers with $r<d$.
Consider a random $d$-ary tree constructed as follows.
Start with a single vertex, and in each time-step choose a uniformly random leaf and give it $d$ newly created offspring.
%Consider the following iterative construction of a random $d$-ary tree.  
Let $\Tcal_t$ be the tree produced after $t$ steps.
We show that there exists a fixed $\delta<1$ depending on $d$ and $r$ such that almost surely for all large $t$, every $r$-ary subtree of $\Tcal_t$ has less than $t^{\delta}$ vertices.
%\replace{asymptotically almost surely as $t\to\infty$,}
%{eventually}
%a random $t$-vertex $d$-ary recursive tree \replace{is}{will be} less than $t^{\delta}$ for large $t$.

The proof involves analysis that also yields a related result.
Consider the following iterative construction of a random planar triangulation.  
Start with a triangle embedded in the plane.
In each step, choose a bounded face uniformly at random,
add a vertex inside that face and join it to the vertices of the face. In this way, one face is destroyed and  three new faces are created.
After $t$ steps, we obtain a random triangulated plane graph with $t+3$ vertices,
which is called a {random Apollonian network}.
We prove that there exists a fixed $\delta<1$, such that %\replace{ asymptotically almost surely as $t\to\infty$,}
{eventually}
every path in this graph {has} length less than $t^{\delta}$, which verifies a conjecture of Cooper and Frieze.

\bigskip

\noindent {\bf AMS 2010 subject classification:}  05C80 (60C05, 05C05)

\bigskip 

\noindent {\bf Keywords:}   random Apollonian networks,
random recursive $d$-ary trees,
longest paths,
Eggenberger-P\'olya urns.
\newpage
\setcounter{footnote}{0}
\def\thefootnote{\arabic{footnote}}

\section{Introduction}\label{intro}
In this paper we study two  important random graph models.
The first one is a so-called \emph{random $d$-ary recursive tree}, defined as follows.
Let $d>1$ be a positive integer.
Consider a random $d$-ary tree evolving as follows. 
At time 0 it consists of exactly one vertex, $\r$. 
In the first step $\r$ gives birth to $d$ offspring.
In each subsequent step we pick, uniformly at random, a vertex with no offspring and connect it with exactly $d$  offspring.
At time $t$ this random tree is denoted by $\Tcal_{t}$. 
See {Drmota}~\cite{random_trees} for more on random $d$-ary recursive trees.
Let $r$ be a fixed positive integer smaller than $d$ and let 
$S_t$ denote the size of the largest (possibly non-unique) $r$-ary subtree of $\Tcal_t$. 

We say  that a sequence of events $\{A_k,\,  k \in \N\}$  occurs \emph{eventually } {(for    large $k$)} if there exists an almost surely (a.s.)  finite random variable $N$ such that   $A_k $ occurs for all  $k \ge N$. 
In this paper all logarithms are natural.
%\nc{I think ``we have, eventually, X'' and ``eventually we have'' are not great wordings since this is saying that we have it eventually (which can mean that if we or the reader waits long enough we can get a proof, not that it is true eventually). Altered. I also deleted some 'we have' since that is needed when there are too many formulae close together but  gets in the way otherwise, as in this case.}
\begin{theorem}
\label{thm:maximalsubtree}
There exists a fixed $\delta<1$ such that {$S_t < t^{\delta}$ eventually}. 
%\abas{we prove strict inequality}
\end{theorem}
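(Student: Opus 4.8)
The plan is a first-moment argument whose engine, as flagged in the abstract, is that the sizes of the $d$ subtrees hanging from any fixed vertex evolve as an Eggenberger--P\'olya urn. For a vertex $v$ let $M_t(v)$ be the size of the largest $r$-ary subtree whose top (closest to $\r$) vertex is $v$; the descendant tree of $v$ is itself a random $d$-ary recursive tree, so $S_t=\max_v M_t(v)$ and $\P(S_t\ge k)\le\sum_v\P(M_t(v)\ge k)$. It therefore suffices to bound a single $M_t:=M_t(\r)$, with a tail strong enough to survive the sum over the $O(t)$ vertices. Recording the leaf-counts of the $d$ subtrees of $\r$, at each step the expanded leaf lies in a given subtree with probability proportional to its leaf-count, adding $d-1$ leaves to it; this is precisely an Eggenberger--P\'olya urn with replacement $d-1$, whose normalised composition converges almost surely to a Dirichlet$(\tfrac1{d-1},\dots,\tfrac1{d-1})$ vector $(\xi_1,\dots,\xi_d)$. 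The same holds recursively, so the subtree below the $i$-th child is a random $d$-ary recursive tree with $\approx\xi_i t$ leaves.

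This yields a recursive description: the largest rooted $r$-ary subtree satisfies $M_t=1+\sum M^{(i)}$, the sum over the $r$ children with the largest $M^{(i)}$. By self-similarity $M^{(i)}\approx(\xi_i t)^{\delta}$ for the right exponent $\delta$, and iterating turns $M_t$ into a multiplicative cascade (a branching random walk) in which each node spawns $d$ offspring carrying multipliers $\xi_1,\dots,\xi_d$, of which only the largest $r$ are retained. The governing quantity is $\Lambda(\delta):=\E\big[\sum_{i=1}^r\xi_{(i)}^{\delta}\big]$, where $\xi_{(1)}\ge\cdots\ge\xi_{(r)}$ are the $r$ largest coordinates. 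Since $x\mapsto x^\delta$ preserves order, the retained set does not depend on $\delta$, so $\Lambda$ is continuous and strictly decreasing; as $\delta\downarrow0$ we get $\Lambda(0)=r\ge1$, while $\Lambda(1)=\E\big[\sum_{i=1}^r\xi_{(i)}\big]<\E\big[\sum_{i=1}^d\xi_i\big]=1$ because $r<d$ and the discarded coordinates are almost surely positive. Hence $\Lambda(\delta)=1$ has a unique root $\delta_0\in(0,1)$, and any fixed $\delta\in(\delta_0,1)$ is the target exponent.

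A bare first-moment estimate only gives $\E[M_t]\le Ct^{\delta}$, whose Markov tail is too weak to beat the sum over $O(t)$ vertices, so I would instead bound all integer moments. Fixing $\delta_0<\delta_1<\delta<1$ and using the recursion together with $\sum_{i\le r}\xi_{(i)}^{\delta_1}\le r$, one should obtain $\E[M_t^p]\le C_p\,t^{p\delta_1}$ for every fixed $p$, the recursion closing precisely because $\Lambda(\delta_1)<1$; moreover this holds uniformly over all $v$, since $M_t(v)$ lives on a recursive tree with at most $1+(d-1)t$ leaves. Markov's inequality with $p$ large then yields $\P(M_t(v)\ge t^{\delta})\le C_p\,t^{-p(\delta-\delta_1)}$, so that $\P(S_t\ge t^{\delta})\le(dt+1)C_p\,t^{-p(\delta-\delta_1)}=O\big(t^{1-p(\delta-\delta_1)}\big)$. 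Choosing $p$ with $p(\delta-\delta_1)>2$ makes $\sum_t\P(S_t\ge t^{\delta})<\infty$, and Borel--Cantelli gives $S_t<t^{\delta}$ eventually.

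The real work is making the cascade rigorous at finite $t$ and closing the moment recursion uniformly. The Dirichlet law is only the limiting composition of the urn, so the exact finite-time subtree fractions, their fluctuations, and especially their behaviour near the boundary of the simplex must be controlled, since the cascade runs for $\Theta(\log t)$ generations and a single atypically unbalanced split could otherwise spawn an unexpectedly large $r$-ary subtree. Handling the ``largest $r$'' selection inside the moment recursion, and keeping all error terms uniform across generations and across the $\Theta(t)$ potential top vertices so that the tail stays summable, is exactly where the precise Eggenberger--P\'olya urn computations---rather than the Dirichlet limit alone---are required; this, rather than the determination of $\delta_0$, is the main obstacle.
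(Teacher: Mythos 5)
You have correctly identified the right raw ingredients---the Eggenberger--P\'olya urn description of the branch sizes, its Dirichlet$(1/(d-1),\dots,1/(d-1))$ structure, the recursion over the $r$ best children, and the fact that the natural exponent drops below $1$ because at least $d-r$ branches must be discarded at every vertex---but the proposal is not a proof, because the two steps you yourself flag as ``the real work'' are left undone, and they are the substance of the theorem. First, your entire cascade is phrased in terms of the \emph{limiting} Dirichlet vector $(\xi_1,\dots,\xi_d)$, yet the recursion must be run for $\Theta(\log t)$ generations at finite $t$; you acknowledge that controlling the finite-time urn compositions is ``the main obstacle'' and do not overcome it. The tool that does this in the paper is Pemantle's mixture-of-binomials representation \cite[Lemma~1]{PE88}, valid at \emph{every} finite time, which yields the exact stochastic domination of Lemma~\ref{Nk1}: $\We(\nu,t)$ is dominated by a Binomial$\bigl(t,\prod_{\s\in[\r,\nu]}B_\s\bigr)$ with the vectors $(B_{\nu 1},\dots,B_{\nu d})$ independent Dirichlet. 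Nothing in your sketch substitutes for this. Second, the moment bound $\E[M_t^p]\le C_p t^{p\delta_1}$ is only asserted (``one should obtain''), and the reason you give for the recursion closing---``precisely because $\Lambda(\delta_1)<1$''---is not sufficient when $p>1$: expanding $\bigl(\sum_{i\le r}M^{(i)}\bigr)^p$ multinomially and using conditional independence, the diagonal terms force a condition of the form $\sup_t\E\bigl[\max_{|S|=r}\sum_{i\in S}(n_i(t)/t)^{p\delta_1}\bigr]<1$, where $n_i(t)$ are the \emph{actual} branch sizes at time $t$, and one must also control all cross terms and the correlation between the ``largest $r$'' selection and the split. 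That uniform-in-$t$ statement is again a finite-time urn estimate, not a consequence of the Dirichlet limit; it is plausible (the expectation converges to $\Lambda(p\delta_1)<1$), but proving it is exactly the missing work. A plan that names its central obstacle without resolving it has a genuine gap.

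It is worth seeing how the paper avoids your obstacle rather than solving it head-on: it never closes a distributional or moment recursion for $M_t$ at all. After Lemma~\ref{Nk1}, it cuts the infinite $d$-ary tree at a single level $n=\Theta(\log t)$ and bounds the total mass that any $r$-ary subtree can place on that level. The key is the deterministic Lemma~\ref{adjm}: since an $r$-ary subtree discards at least $d-r$ offspring at each vertex, the retained offspring carry at most a $(1-\Upsilon_\nu)$-fraction of their parent's mass, so the ``adjusted mass'' of any level-$n$ cross-section is at most $1$; Borel--Cantelli (Lemma~\ref{lem-e-h}) then guarantees that eventually every path product satisfies $\prod_{\s\in[\r,\parent{\nu}]}(1-\Upsilon_\s)\le\kappa^{-n}$, giving $\max_{C\in\Gcal_{n,r}}\Ma(C)\le\kappa^{-n}$ (Lemma~\ref{ABb}); finally, Chernoff bounds together with the lower-tail estimate for products of independent Betas (Lemma~\ref{productbetaproperties}) convert mass into vertex counts, yielding $S_t\le K(r^n+t\kappa^{-n})$ (Theorem~\ref{mainmain}) and hence $S_t<t^\delta$. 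The union bounds there run over the $d^n$ vertices of one fixed level, not over $\Theta(t)$ potential top vertices and arbitrarily high moments, which is precisely why no uniform moment recursion---and no analysis of the ``largest $r$'' selection inside expectations---is ever needed. If you wish to salvage your route, the first import should be Pemantle's finite-time representation; but the selection-inside-moments difficulty suggests the paper's cut-at-one-level mass argument is the cleaner path.
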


%\abas{I replaced $\ln$ with $\log$ throughout}

In Section~\ref{sec:explicitrary} we show we can take
$$\delta = 1 - \frac{d-r}{ed^{2d}\log \left(11 d \log d \right)}$$
in this theorem.

The second object we study is a popular random graph model for generating planar graphs with power law properties, which is defined as follows.
Start with a triangle embedded in the plane.
At each step, choose a bounded face uniformly at random,
add a vertex inside that face and join it to the vertices on the face. 
{In this way, one face is destroyed and  three new faces are created.}
We call this operation \emph{subdividing} the face.
After $t$ steps, we have a (random) triangulated plane graph $\R$ with $t+3$ vertices, $3t+3$ edges,   $2t+1$ bounded faces, and 1 unbounded face.
The random graph $\R$ is called a \emph{random Apollonian network}. 
%\abas{delete the following?!} and we study its asymptotic propertiesas $t$ goes to infinity.
%The number of edges of $\R$ is $3t+3$, and hence it is a maximal plane graph.

Random Apollonian networks were defined by Zhou, Yan, and Wang~\cite{define_RANs} 
(see Zhang, Comellas, Fertin, and Rong~\cite{high_RANs} for a generalization to higher dimensions),
where it was proved that the diameter of $\R$ is probabilistically bounded above by a constant times $ \log t$.
It was shown in~\cite{define_RANs,RANs_powerlaw} that $\R$ exhibits a power law degree distribution for large $t$.
The average distance between two vertices in a typical $\R$ was shown to be $\Theta(\log t)$ by Albenque and Marckert~\cite{RANs_average_distance},
and a central limit theorem was proved by
Kolossv\'{a}ry, Komj\'{a}ty, and V\'{a}g\'{o}~\cite{istvan}.
The degree distribution, $k$ largest degrees, $k$ largest eigenvalues (for fixed $k$), and   diameter
were studied by Frieze and Tsourakakis~\cite{first}. 
The asymptotic value of the diameter of a typical $\R$ was determined in~\cite{we_rans}.
We continue this line of research by
studying the asymptotic properties of the longest (simple) paths in $\R$.

Let $\mathcal{L}_t$ be a random variable denoting the number of vertices in a longest path in $\R$.
All the limits in this paragraph are as $t\to\infty$.
Frieze and Tsourakakis~\cite{first} conjectured there exists a fixed $\delta>0$ such that
$\prob{\delta t \le \mathcal{L}_t < t}\to 1 $.
Ebrahimzadeh, Farczadi, Gao, Mehrabian, Sato, Wormald, and Zung~\cite{we_rans} refuted this conjecture and showed there exists a fixed $\delta>0$ such that $\prob{\mathcal{L}_t < t / (\log t)^{\delta}}\to1$.
Cooper and Frieze~\cite{longpaths_new} improved this result by showing that for every constant $c<2/3$, we have $\prob{\mathcal{L}_t \le t \exp(-\log^c t)}\to 1$,
and conjectured there exists a fixed $\delta<1$ such that 
$\prob{\mathcal{L}_t \le t^{\delta}}\to1$.
The second main result of this paper is the following, which in particular confirms this conjecture.
%\abas{and our result is actually stronger than their conjecture ?!}

\begin{theorem}
\label{thm:longest_upper}
There exists a fixed $\delta<1$ such that 
{$\mathcal{L}_t < t^{\delta}$ eventually}.
% \abas{we prove strict inequality}
\end{theorem}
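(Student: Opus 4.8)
The plan is to reduce Theorem~\ref{thm:longest_upper} to Theorem~\ref{thm:maximalsubtree} with the specific parameters $d=3$ and $r=2$, by exhibiting a ternary recursive tree hidden inside the construction of $\R$. First I would record the subdivisions as a tree: let the root correspond to the initial bounded triangular face, and whenever a bounded face is subdivided regard that face as acquiring three children, namely the three new faces. Since at every step a bounded face is chosen uniformly at random, and the bounded faces are exactly the current leaves of this tree, the resulting random tree of faces is distributed \emph{exactly} as $\Tcal_t$ with $d=3$. Writing $F$ for this face tree, Theorem~\ref{thm:maximalsubtree} applies to $F$, and the explicit exponent for $d=3,\,r=2$ is available from Section~\ref{sec:explicitrary}.

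Second, and this is the heart of the matter, I would prove a \emph{deterministic} combinatorial bound relating path length to binary subtrees: for every random Apollonian network and every simple path $P$ in it,
\[
  |V(P)| \;\le\; C\cdot\big(\text{size of the largest $2$-ary subtree of } F\big)
\]
for an absolute constant $C$. The key structural facts are that (i) every vertex of $\R$ other than the three initial corners is the centre $v_f$ of a unique subdivided face $f$, so that vertices of $\R$ biject with internal nodes of $F$, and (ii) when a triangular region $R_f$ is subdivided, its interior communicates with the rest of the plane only through the three boundary (corner) vertices of $f$. By planarity and simplicity, $P$ can cross the boundary of $R_f$ at most three times, so $P\cap\mathrm{int}(R_f)$ splits into only a bounded number of subpaths. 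I would apply this interface bound recursively down $F$ to show that, as the path descends the ternary tree, it can branch into at most two of the three child-regions in a way that each receives a positive fraction of the remaining vertices; collecting the face-nodes that carry $P$ then yields a $2$-ary subtree $T\subseteq F$ with $|T|\ge c\,|V(P)|$.

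Finally I would combine the two ingredients. Taking $P$ to be a longest path gives $\mathcal{L}_t\le C\,S_t$, where $S_t$ now denotes the largest $2$-ary subtree of $\Tcal_t$ with $d=3$. By Theorem~\ref{thm:maximalsubtree} there is a fixed $\delta'<1$ with $S_t<t^{\delta'}$ eventually, whence $\mathcal{L}_t\le C\,t^{\delta'}<t^{\delta}$ eventually for any fixed $\delta\in(\delta',1)$, the constant $C$ being absorbed for large $t$. The main obstacle is clearly the deterministic lemma in the second step: one must upgrade the topological interface bound into a clean ``at most two heavy children'' branching statement and check that the resulting $2$-ary subtree loses at most a constant factor in size. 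Controlling how the path threads the shared corner vertices of nested triangles — so that only a bounded amount of path length goes unaccounted for at each face — is the delicate part; once the branching is pinned to two, the entire probabilistic content is supplied by Theorem~\ref{thm:maximalsubtree}.
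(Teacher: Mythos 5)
Your first step is fine --- the face tree is exactly the $\triangle$-tree of Section~\ref{sec:rans}, and it is indeed distributed as a random $3$-ary recursive tree --- but the deterministic lemma at the heart of your second step is false. A simple path in $\R$ \emph{can} meet the interiors of all three sub-triangles of a subdivided face. Concretely, let the face $xyz$ be subdivided by its centre $v$ into $xyv$, $xzv$, $yzv$. A path may enter at $x$, traverse interior vertices of $xzv$, pass through $v$, traverse interior vertices of $xyv$, pass through $y$, traverse interior vertices of $yzv$, and leave at $z$; all vertices used are distinct, so the path is simple, yet it visits all three child regions. Your interface count (each corner used at most once by a simple path) is correct, but it does not give ``at most two children'': the three corners together with the centre $v$ provide four interface vertices, which are enough to thread all three regions, and this can be repeated at every scale. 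This is precisely why the paper does \emph{not} reduce to Theorem~\ref{thm:maximalsubtree} with $(d,r)=(3,2)$: the correct structural fact, Lemma~\ref{lem:9children} (imported from~\cite{we_rans}), lives two levels down --- a path meets at most \emph{eight of the nine} grand-offspring regions --- so the effective branching per level is $\sqrt{8}\approx 2.83$, not $2$. The constant-factor repair you hope for is also not to be expected: the three-way routing above is exactly the mechanism behind the lower bound $\expected{\mathcal{L}_t}=\Omega\left(t^{0.88}\right)$ quoted in the introduction, so a bound $\mathcal{L}_t\le C\,S_t$ with $S_t$ the largest binary subtree would force $S_t=\Omega\left(t^{0.88}\right)$, for which you give no argument and which there is no reason to believe.

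The paper's actual route keeps your step 1 but replaces your step 2 as follows. It calls a subtree of the $3$-ary tree \emph{buono} if every vertex has at most eight grand-offspring in it, uses Lemma~\ref{lem:9children} to show that the set of face-nodes whose regions meet a path induces a buono subtree (giving $|V(P)|\le 3+|R(P)|$), and then proves a separate result, Theorem~\ref{mainmainA}, bounding the largest buono subtree of $\Tcal_t$. That theorem is not a black-box application of Theorem~\ref{thm:maximalsubtree} either: one contracts pairs of levels to obtain a $9$-ary tree whose vertex variables are products $X_\mu=B_\mu B_{\parent{\mu}}$ of Dirichlet$(1/2,1/2,1/2)$ coordinates (so buono subtrees become $8$-ary subtrees, as in Lemma~\ref{ABb1A}), and then reruns the mass machinery of Section~\ref{sec:preliminaries} (Lemma~\ref{ABb}) together with the binomial domination of Lemma~\ref{Nk1}, since the contracted tree is not itself a random $9$-ary recursive tree. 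So the probabilistic infrastructure you invoke is the right one, but only after your false ``two-of-three'' branching claim is replaced by the ``eight-of-nine'' grand-offspring lemma; that replacement changes both the combinatorial reduction and the resulting exponent.
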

\noindent We can take $\delta = 1-4\times 10^{-8}$,
as shown in Section~\ref{sec:explicit_rans}.
 
Regarding lower bounds, it was proved in~\cite{we_rans} that $\mathcal{L}_t > (2t+1)^{\log 2 / \log 3}$ deterministically,
and that $\expected{\mathcal{L}_t} = \Omega\left(t^{0.88}\right)$.

We prove the two main theorems by studying a third object, an infinite tree {with weighted vertices}, which is introduced in Section~\ref{sec:preliminaries}.
Then we prove Theorems~\ref{thm:maximalsubtree} and~\ref{thm:longest_upper} in 
Sections~\ref{sec:r-ary} 
and~\ref{sec:rans}, respectively.
{Note that both of these theorems are existential. 
In Section~\ref{sec:appendix} we give explicit bounds for the values of $\delta$ in these theorems.}

\section{Subtrees of an infinite $d$-ary tree}
\label{sec:preliminaries}

Fix positive integers $r,d$ with $r < d$.
Let $ \Tcal$ be an infinite rooted $d$-ary tree. 
Denote the root by $\r$. 
We denote by $[\nu, \mu]$ the vertices in the  path connecting $\nu$ to $\mu$, including these two vertices. 
For a vertex $\nu$, 
denote its distance from $\r$ by $|\nu|$,
and its offspring by $\nu i$, with $i \in \{1, 2, \ldots, d\}$. 
For $\nu\neq\r$, denote by $\parent{\nu}$ the parent of $\nu$, i.e. the neighbour $\mu$ of $\nu$ with $|\mu| = |\nu| -1$.

% For each vertex $\nu$, with the exception of the root, 
%Define  the ancestors of a vertex $\mu$ as the vertices in $[\r, \mu]$.

To each vertex $\nu$ assign a random variable $X_{\nu} \in (0,1]$, satisfying the following properties. 
We have $X_\r = 1$.  
The random variables $X_{\nu}$, for $\nu \in V(\Tcal)\setminus\{\r\}$, are identically distributed. 
%We assume that the random variable $X_{\s}$ where $\s$ ranges in $[\r, \mu]$ are independent.  
Moreover we assume that  the vectors  $(X_{\nu1}, X_{\nu2}, \ldots,  X_{\nu d})$ are identically distributed and independent,
and that 
$\sum_{i=1}^{d} X_{\nu i}=1$.
For any vertex $\nu$, define the random variable
\begin{equation}
\label{def_upsilonfirst}
\Upsilon_{\nu} \Def \min \{ X_{\nu i_1}+ X_{\nu i_2} +\ldots + X_{\nu i_{d-r}}\colon 1\le i_1< i_2 <\ldots< i_{d-r} \le d\}\:,
\end{equation}
and let $\Upsilon \Def \Upsilon_{\nu}$ for an arbitrary $\nu$.

For each vertex $\nu \in V(\Tcal)$, define 
$$ \Ma(\nu) \Def \prod_{\s \in [\r, \nu]} X_\s,$$
and for any set of vertices $A \subset V(\Tcal)$, %\replace{set}
{let}
$\Ma(A) = \sum_{\nu \in A} \Ma (\nu)$.

Given non-negative integer  $n$, consider \emph{level $n$} of $ \Tcal$, i.e. the set of vertices at distance $n$ from $\r$. Denote by 
$\Gcal_{n, r}$ the collection of subsets of {at most} $r^{n}$ vertices at level $n$, with the additional property that they belong to the same $r$-ary subtree.  

\pagebreak

The main result of this section is the following.
\begin{lemma}\label{ABb} 
Let $\lambda, \kappa$ be positive constants satisfying
\begin{equation}
\label{kappalambdacond}
d \: \kappa^{\lambda  } \: \expected{\left(1-\Upsilon\right)^{\lambda}} < 1 \:.
\end{equation}
Then eventually for large $n$,
$$ \max_{C \in \Gcal_{n, r}}  \Ma(C) \le \kappa^{-n}.$$
\end{lemma}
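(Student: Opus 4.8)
The plan is to recast the random quantity $\max_{C \in \Gcal_{n,r}} \Ma(C)$ as the output of a self-similar recursion and then control it by the moment method. Since every mass $\Ma(\nu)$ is strictly positive, the maximum over $C \in \Gcal_{n,r}$ is attained by taking the \emph{entire} level $n$ of some $r$-ary subtree rooted at $\r$. So for a vertex $\nu$ and $m \ge 0$, let $W_m(\nu)$ be the maximum, over all $r$-ary subtrees rooted at $\nu$, of the total mass relative to $\nu$ carried by the subtree vertices at relative depth $m$; thus $W_0(\nu)=1$, and $\max_{C \in \Gcal_{n,r}} \Ma(C) = W_n(\r)$ because $\Ma(\r)=1$. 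Conditioning on the top branching, an optimal $r$-ary subtree rooted at $\nu$ picks $r$ of the $d$ children and continues optimally below each, which gives the recursion
\[ W_m(\nu) = \max_{1 \le i_1 < \cdots < i_r \le d}\ \sum_{j=1}^{r} X_{\nu i_j}\, W_{m-1}(\nu i_j). \]
By the independence and identical-distribution hypotheses on the children weights, the variables $\{W_{m-1}(\nu i)\}_{i=1}^{d}$ are i.i.d.\ copies of a generic $W_{m-1}$, and they are independent of the weight vector $(X_{\nu 1},\dots,X_{\nu d})$.

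The core estimate decouples the choice of the best $r$ children from the deeper structure. For any index set $S$ of size $r$ we have $\sum_{i\in S} X_{\nu i}\, W_{m-1}(\nu i) \le \bigl(\max_i W_{m-1}(\nu i)\bigr)\sum_{i\in S} X_{\nu i}$, and $\sum_{i\in S} X_{\nu i} \le 1 - \Upsilon_\nu$ since $\Upsilon_\nu$ is the minimal total weight of $d-r$ children, hence $1-\Upsilon_\nu$ is the maximal total weight of $r$ children. Taking the maximum over $S$ yields
\[ W_m(\nu) \le (1 - \Upsilon_\nu)\,\max_{1 \le i \le d} W_{m-1}(\nu i). \]
Writing $f_m = \E\bigl[W_m^{\lambda}\bigr]$ and using both $\max_i a_i^{\lambda} \le \sum_i a_i^{\lambda}$ for nonnegative $a_i$ and the independence of $\Upsilon_\nu$ from the subtree variables $W_{m-1}(\nu i)$, I get
\[ f_m \le \E\bigl[(1-\Upsilon)^{\lambda}\bigr]\,\E\Bigl[\max_i W_{m-1}(\nu i)^{\lambda}\Bigr] \le d\,\E\bigl[(1-\Upsilon)^{\lambda}\bigr]\, f_{m-1}. \]
With $f_0 = 1$ this recursion closes immediately and gives $f_n = \E\bigl[W_n^{\lambda}\bigr] \le \bigl(d\,\E[(1-\Upsilon)^{\lambda}]\bigr)^{n}$.

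To finish, apply Markov's inequality to the $\lambda$-th moment:
\[ \P\bigl(W_n(\r) > \kappa^{-n}\bigr) = \P\bigl(W_n^{\lambda} > \kappa^{-n\lambda}\bigr) \le \kappa^{n\lambda}\, f_n \le \bigl(d\,\kappa^{\lambda}\,\E[(1-\Upsilon)^{\lambda}]\bigr)^{n}. \]
Condition \eqref{kappalambdacond} is exactly the statement that the base of this geometric bound is strictly less than $1$, so these probabilities are summable in $n$. The Borel--Cantelli lemma then shows that almost surely $W_n(\r) \le \kappa^{-n}$ for all large $n$, i.e.\ $\max_{C \in \Gcal_{n,r}} \Ma(C) \le \kappa^{-n}$ eventually, as claimed.

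The step I expect to be the real obstacle is not any single computation but the conceptual one of avoiding a direct union bound: the number of $r$-ary subtrees truncated at level $n$ is of order $\binom{d}{r}^{(r^n-1)/(r-1)}$, which is doubly exponential in $n$, so $\max_C \Ma(C)$ cannot be handled term by term. The crucial move is the bound $W_m \le (1-\Upsilon)\max_i W_{m-1}(\nu i)$, which simultaneously respects the self-similar recursion and cleanly separates the vertex weight vector (captured by the single scalar $1-\Upsilon$) from the independent subtrees hanging below it. This separation is what collapses the combinatorial explosion into a one-dimensional recursion for the $\lambda$-th moment, after which Markov and Borel--Cantelli, fuelled by hypothesis \eqref{kappalambdacond}, deliver the conclusion.
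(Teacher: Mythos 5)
Your proof is correct, and it is essentially the paper's argument in different packaging: your key recursion $W_m(\nu) \le (1-\Upsilon_\nu)\max_i W_{m-1}(\nu i)$ is exactly the per-level loss of a factor $1-\Upsilon_\nu$ that the paper encodes in its deterministic adjusted-mass bound (Lemma~\ref{adjm}, via $\Ma(\nu_{\rm off}) \le (1-\Upsilon_\nu)\Ma(\nu)$), while your max-to-sum step in the moment recursion plays precisely the role of the paper's union bound over the $d^n$ level-$n$ vertices in Lemma~\ref{lem-e-h}, both routes arriving at the same geometric quantity $\bigl(d\,\kappa^{\lambda}\,\E\bigl[(1-\Upsilon)^{\lambda}\bigr]\bigr)^{n}$ followed by Markov and Borel--Cantelli. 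The difference is purely organizational --- the paper separates a deterministic mass bound from a probabilistic bound on the path products $\prod_{\s}(1-\Upsilon_\s)$, whereas you fold both into a single self-similar moment recursion --- so no further comparison is needed.
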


For %\replace{any fixed $n,\kappa$}
{a given positive integer $n$ and a positive number $\kappa$}, define the event 
$$ \Ccal_{n, \kappa}  \Def \bigcap_{\nu \colon |\nu|=n} \left\{\prod_{\s \in [\r, \parent{\nu}]} \big(1 - \Upsilon_\s\big)^{-1} \ge \kappa^n \right\},
$$
and define the random variable
\begin{equation}\label{enn}
 N_1 = N_1(\kappa) = \min \{ n \colon \Ccal_{j, \kappa} \mbox{ holds for all $j \ge n$}\}.
\end{equation}
We set $N_1 =  \infty$ if the set {on} the right-hand side is empty.

\begin{lemma}\label{lem-e-h}
If $\lambda,\kappa> 0$ satisfy \eqref{kappalambdacond}, then $ N_1(\kappa)$ is a.s.\ finite.
\end{lemma}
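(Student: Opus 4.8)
The plan is to prove that $N_1(\kappa)$ is a.s.\ finite by a first-moment (union bound) argument combined with the Borel--Cantelli lemma. Since the event $\{N_1(\kappa)<\infty\}$ is precisely the event that $\Ccal_{n,\kappa}$ holds for all large $n$, it suffices to show that $\sum_{n\ge 1}\P\big(\Ccal_{n,\kappa}^c\big)<\infty$; then a.s.\ only finitely many of the events $\Ccal_{n,\kappa}^c$ occur, which gives the claim.

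First I would rewrite the defining condition of $\Ccal_{n,\kappa}$ in product form: for a vertex $\nu$ with $|\nu|=n$, the bound $\prod_{\s\in[\r,\parent{\nu}]}(1-\Upsilon_\s)^{-1}\ge \kappa^n$ is equivalent to $\prod_{\s\in[\r,\parent{\nu}]}(1-\Upsilon_\s)\le \kappa^{-n}$. Applying a union bound over the $d^n$ vertices at level $n$ gives
\[
\P\big(\Ccal_{n,\kappa}^c\big)\le \sum_{\nu:\,|\nu|=n}\P\!\left(\prod_{\s\in[\r,\parent{\nu}]}(1-\Upsilon_\s)>\kappa^{-n}\right).
\]

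The key structural observation is that along a fixed root-to-$\parent{\nu}$ path the factors are independent. Indeed, $[\r,\parent{\nu}]$ consists of $n$ distinct vertices (at levels $0,1,\dots,n-1$), each $\Upsilon_\s$ is a function only of the offspring vector of $\s$, and distinct vertices have independent offspring vectors; hence the $(1-\Upsilon_\s)$ for $\s\in[\r,\parent{\nu}]$ are $n$ i.i.d.\ copies of $1-\Upsilon$. I would then bound each summand by Markov's inequality applied to the $\lambda$-th power, using this independence:
\[
\P\!\left(\prod_{\s\in[\r,\parent{\nu}]}(1-\Upsilon_\s)>\kappa^{-n}\right)\le \kappa^{n\lambda}\,\expected{\prod_{\s\in[\r,\parent{\nu}]}(1-\Upsilon_\s)^{\lambda}}=\kappa^{n\lambda}\big(\expected{(1-\Upsilon)^{\lambda}}\big)^{n}.
\]

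Putting these together yields $\P\big(\Ccal_{n,\kappa}^c\big)\le \big(d\,\kappa^{\lambda}\,\expected{(1-\Upsilon)^{\lambda}}\big)^{n}=q^{\,n}$, where $q<1$ by hypothesis \eqref{kappalambdacond}. Since $\sum_n q^{\,n}<\infty$, the Borel--Cantelli lemma finishes the argument. The computation is routine once the setup is in place; the only point requiring genuine care is the independence along a path — verifying that the vertices of $[\r,\parent{\nu}]$ are distinct so that their $\Upsilon$-values are truly independent, and that there are exactly $n$ of them. This is exactly what makes the per-vertex moment factor $\kappa^{\lambda}\expected{(1-\Upsilon)^{\lambda}}$ combine with the factor $d^n$ from the union bound into the geometric decay rate $q=d\,\kappa^{\lambda}\,\expected{(1-\Upsilon)^{\lambda}}$ that condition \eqref{kappalambdacond} was designed to keep below $1$.
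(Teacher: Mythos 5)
Your proof is correct and takes essentially the same route as the paper's: a union bound over the $d^n$ vertices at level $n$, Markov's inequality applied to the $\lambda$-th power of the product of the independent factors $(1-\Upsilon_\s)$ along the root-to-$\parent{\nu}$ path, and the first Borel--Cantelli lemma, with condition \eqref{kappalambdacond} supplying the geometric decay. Your explicit verification that the $\Upsilon_\s$ are i.i.d.\ along the path (each being a function of a distinct vertex's offspring vector) is a minor elaboration of what the paper states in one clause, but the argument is identical.
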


\begin{proof}
By the first Borel-Cantelli lemma, it is enough to show that 
\begin{equation}
\label{borelnts}
\sum_{n=1}^\infty d^n \bbP\left(\prod_{\s \in [\r, \parent{\nu}]} \big(1 - \Upsilon_\s\big)^{-1} < \kappa^n\right)<\infty \:,
\end{equation}
where $\nu=\nu(n)$ denotes an arbitrary vertex with $|\nu|=n$.
Since  the $\Upsilon_{\s}$ are independent and $\lambda > 0$,  the above probability  is by Markov's inequality
\begin{align*}
\bbP\left(\prod_{\s \in [\r, \parent{\nu}]} \big(1 - \Upsilon_\s\big)^{\lambda} > \kappa^{-\lambda n}\right)
& 
\le \expected{\prod_{\s \in [\r, \parent{\nu}]} \big(1 - \Upsilon_\s\big)^{\lambda}} \kappa^{\lambda n} \\
& = \left(\expected{\big(1 - \Upsilon\big)^{\lambda}} \kappa^{\lambda} \right)^n \:.
\end{align*}
The inequality \eqref{borelnts} now follows from \eqref{kappalambdacond}.
\end{proof}

%For any $ \eps < 1/d $ we have that $p >0$. Moreover  $\lim_{\eps \to 0}  p(\eps)=1.$
%Next we use the Chernoff inequality. Let bin($n$, $p$)   denote the binomial distribution with parameter $n$ and $p$. A particular case of Chernoff inequality states that if $Z_n$ is distributed as  bin($n$, $p$),   then for all $x >p$ we have
%$$ \bbP(Z_n > nx) \le {\rm e}^{- n J(x,p)},$$
%where 
%\begin{equation}\label{bluejay}
% J(x, p) =  x \log\Big(\frac xp\Big) +  (1- x) \log \Big( \frac{1- x}{1-  p}\Big), \qquad \mbox{ for } (x,p) \in (0, 1)^2.
%\end{equation}
%We have that 
%\begin{equation}\label{tic1}
%d^{n} \cdot \bbP\left(\sum_{\s \in [\r, \parent{\nu}]} \1_{\{\s  \mbox{ \small is } \ef \}}  \le h n\right) \le \exp\big\{ -n \big( J(h, p) - \log d\big)\big\},
%\end{equation}

%For any fixed $h$,  we have 
%$ \lim_{p \uparrow 1}J(h, p)  = \infty $, which combined with  \eqref{tic1} implies \eqref{sf}.

For each vertex $\nu$, we define its  {\it adjusted mass}, denoted by $\MMa(\nu)$,
%\nc{Amas looks really bad given that the other is Mass. Can we call it Amass?} 
as follows. For the root, $\MMa(\r)=1$, and for all other vertices $\nu$,
$$ 
\begin{aligned}
\MMa(\nu) &\Def \prod_{\s \in [\r, \parent{\nu}] }  X_{\s } \left(\frac 1{1-\Upsilon_\s}\right). 
\end{aligned}
$$
%It is important for us to understand the typical behaviour of the last product appearing in \eqref{prb} and the possible deviations from this typical behaviour.
%\replace{From now on,}{} 
For any $A \subset V(\Tcal)$, %\replace{set}
{let}
$\MMa(A) = \sum_{\nu \in A} \MMa (\nu)$.
\begin{lemma}\label{adjm} 
For every positive integer $n$ and every
$C \in \Gcal_{n,r }$ we have 
$ \MMa(C) \le 1$.
%, \qquad \mbox{for all } n \in  \N.$$
\end{lemma}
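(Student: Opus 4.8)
The plan is to prove $\MMa(C) \le 1$ for every $C \in \Gcal_{n,r}$ by exhibiting $\MMa$ as a \emph{normalized} mass that behaves like a sub-probability distribution when we descend through the tree. The key observation is that the adjusted mass is designed precisely so that the local multiplicative factors along any edge, when restricted to an $r$-ary subtree, sum to at most $1$. Concretely, for a vertex $\nu$ with children $\nu 1, \ldots, \nu d$, an $r$-ary subtree uses at most $r$ of these children, and for any choice of $r$ children indexed by a set $I$ with $|I| = r$, the quantity $\sum_{i \in I} X_{\nu i}$ is at most $1 - \Upsilon_\nu$, since $\Upsilon_\nu$ is the \emph{minimum} sum of $d-r$ of the $X_{\nu i}$, and the complementary $r$ values sum to $1 - (\text{sum of some } d-r \text{ of them}) \le 1 - \Upsilon_\nu$. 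Hence the normalized weights $X_{\nu i}/(1-\Upsilon_\nu)$ over any admissible set of $r$ children sum to at most $1$.

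The execution I have in mind is an induction on $n$, carried out uniformly over the family $\Gcal_{n,r}$, or equivalently a direct telescoping argument on the tree. First I would set up the inductive claim in the stronger local form: for any vertex $\mu$ and any collection $C$ of at most $r^{n}$ level-$n$ descendants of $\mu$ lying in a common $r$-ary subtree rooted (below) at $\mu$, the sum $\sum_{\nu \in C} \prod_{\s \in (\mu, \nu]} X_\s (1-\Upsilon_\s)^{-1}$ is at most $1$, where the product runs over the edges strictly below $\mu$ down to $\nu$. The base case $n=0$ is immediate since $C$ is then a single vertex contributing the empty product, equal to $1$. For the inductive step I would group the vertices of $C$ according to which child of the subtree's root they descend from: the $r$-ary structure forces them to lie below at most $r$ children $\mu i_1, \ldots, \mu i_k$ with $k \le r$, and each group is itself an admissible collection of at most $r^{n-1}$ level-$(n-1)$ descendants of the corresponding child. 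Applying the inductive hypothesis to each group gives a contribution bounded by the single edge factor $X_{\mu i_j}/(1-\Upsilon_\mu)$, and summing over $j$ uses the key inequality above to conclude the total is at most $1$.

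The main obstacle, and the point requiring the most care, is the bookkeeping that guarantees the $r$-ary constraint propagates correctly through the recursion: one must verify that the definition of $\Gcal_{n,r}$ (at most $r^n$ vertices at level $n$ belonging to a common $r$-ary subtree) restricts, when we pass to the children of the subtree root, to sets that are again admissible at level $n-1$ for the appropriate child subtrees, with the branching factor at each internal node capped at $r$. I would be explicit that an $r$-ary subtree by definition has each vertex using at most $r$ of its $d$ children, so that the denominator $1 - \Upsilon_\mu$ always dominates the sum of the $X_{\mu i}$ over the children actually used; this is exactly where the factor $(1-\Upsilon_\s)^{-1}$ in the definition of $\MMa$ pays for itself. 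Once the local summation bound and the inductive descent are in place, the result follows by taking $\mu = \r$, noting $\MMa(\nu) = \prod_{\s \in (\r, \nu]} X_\s (1-\Upsilon_\s)^{-1}$ for $\nu \neq \r$ and $\MMa(\r) = 1$.
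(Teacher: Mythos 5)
Your proof is correct and is essentially the paper's own argument: the paper likewise passes to the $r$-ary subtree spanned by $C$, applies the same key inequality (any $r$ or fewer of the $X_{\nu i}$ sum to at most $1-\Upsilon_\nu$, since $\Upsilon_\nu$ is the minimal sum of $d-r$ of them) to get $\MMa(\nu_{\rm off})\le \MMa(\nu)$ at each internal vertex, and then telescopes the level sums down from $\MMa(\r)=1$, so your induction on depth is just that telescoping organized recursively. One notational caution: the edge factor must pair the child's $X$ with the \emph{parent's} $\Upsilon$, i.e.\ $\MMa(\nu)=\Ma(\nu)\prod_{\s\in[\r,\parent{\nu}]}(1-\Upsilon_\s)^{-1}$, which is exactly what your inductive step uses via $X_{\mu i_j}/(1-\Upsilon_\mu)$, although the formula in your closing paragraph instead writes $(1-\Upsilon_\s)^{-1}$ with $\s$ ranging over $(\r,\nu]$ — with that literal pairing the children's $\Upsilon$'s are unrelated to the key inequality and the inductive step would not close.
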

\begin{proof}
Let  $C\in \Gcal_{n, r}$. Define $\T(C)$ to be  the $r$-ary subtree of $\Tcal$ whose  leaves are the vertices of $C$. For each vertex $\nu$ of $\T(C)$,   denote  its set of offspring in $\T(C)$ by $\nu_{\rm off}$. Then by the definition of $\Upsilon$ in~\eqref{def_upsilonfirst}, 
\[
\Ma(\nu_{\rm off} )  \le (1- \Upsilon_{\nu}) \Ma(\nu).
\]
 Thus $  \MMa(\nu_{\rm off} ) \le \MMa(\nu).$ 
Hence, for any $1\le k \le n$, we have  
$$ \sum_{\heap{\mu \in \T(C)}{|\mu| = k}} \MMa (\mu) \le \sum_{\heap{\nu \in \T(C)}{|\nu| = k-1}} \MMa (\nu).$$
Iterating this,  we get 
\begin{align*}\MMa(C) &= \sum_{\nu \in C} \MMa (\nu) \le \MMa(\r) =1.\qedhere
\end{align*}
\end{proof}

\begin{proof}[Proof of Lemma~\ref{ABb}.]
Recall the definition of $N_1$ from \eqref{enn}. 
Lemma~\ref{lem-e-h} implies that $N_1$ is a.s. finite.
If  $n \ge N_1$, then  for any $ C \in \Gcal_{n,r}$ we have 
$$ \Ma(C) \le \kappa^{-n} \MMa(C) \le \kappa^{-n},$$
where the last inequality is a consequence of Lemma~\ref{adjm}.
\end{proof}

%\begin{paragraph}{Chernoff bound for binomial random variables.}
%Define
%\begin{equation*}
% J(x, p) \Def  x \log\Big(\frac xp\Big) +  (1- x) \log \Big( \frac{1- x}{1-  p}\Big), \qquad \mbox{ for } (x,p) \in (0, 1)^2.
%\end{equation*}
%  A particular case of Chernoff inequality states that if $Z_n$ is distributed as  Binomial($n$, $p$),   then for all $x \in(0,p)$ we have
%$$ \bbP(Z_n < nx) \le {\rm e}^{- n J(x,p)},$$
%and for all $x\in(p,1)$ we have
%$$ \bbP(Z_n > nx) \le {\rm e}^{- n J(x,p)}.$$
%\end{paragraph}

\section{Largest $r$-ary subtrees of random $d$-ary   trees}
\label{sec:r-ary}
\newcommand{\node}[1]{{\mathbf{v}}^{#1}}
As the Beta and Dirichlet distributions play an important role in this paper, we recall their definitions.

\begin{definition}[Beta and Dirichlet distributions]
Let $\Gamma(t) \Def \int_0^\infty x^{t-1} {\rm e}^{-x} \d x$.
For positive numbers $\alpha,\beta$, 
a random variable is said to be distributed as Beta$(\alpha, \beta)$ if it has density 
$$ \frac{\Gamma(\alpha + \beta)}{\Gamma (\alpha) \Gamma(\beta)} \: x^{\alpha -1} \: (1-x)^{\beta -1} \qquad \mathrm{\ for\ }x \in (0,1)\:.
$$
The multivariate generalization of the Beta distribution is called the Dirichlet distribution.
Let $\alpha_1,\alpha_2,\dots,\alpha_n$ be positive numbers. The Dirichlet($\alpha_1, \alpha_2, \ldots \alpha_n$) distribution has support on the set 
$$ \Big\{(x_1, x_2, \ldots, x_n) \colon x_i \ge 0 \mathrm{\ for\ }1\le i\le n, \mathrm{\ and\ } \sum_{i=1}^n  x_i =1\Big\} \:,$$ 
and its density at point $(x_1,x_2,\dots,x_n)$ equals
$$
\frac{\Gamma\Big(\sum_{i=1}^n \alpha_i\Big) }{\prod_{i=1}^n \Gamma(\alpha_i)} \: \prod_{j=1}^n x_j^{\alpha_j -1}.
$$
Note that if the vector $(X_1, X_2, \ldots, X_n)$ is distributed as Dirichlet($\alpha_1, \alpha_2, \ldots, \alpha_n)$, then the marginal distribution of $X_i$ is Beta($\alpha_i, \sum_{ j \neq i} \alpha_j$).
\end{definition}

Let $r$ and $d$ be fixed positive integers with $r<d$.
Let $(B_1, B_2, \ldots, B_d) $ be a  random vector distributed as Dirichlet$(1/(d-1), 1/(d-1), \ldots, 1/(d-1))$, and define the random variable $\Upsilon$ as
\begin{equation}
\label{def_upsilon}
\Upsilon \Def  \min \{ B_{i_1}+ B_{i_2} \ldots + B_{i_{d-r}}\colon 1\le i_1< i_2 <\ldots< i_{d-r} \le d\}\:.
\end{equation}

%In this section we want to choose parameters $n$ and \nc{$m$ looks like an integer to me. Should we change it to $\mu$ for instance? I guess $\mu$ is used later as a vertex, but we could change that or choose a different letter.} \aaa{We like $m$ better, as $m^n$ should be integer}  $\beta, m,\eta,\kappa, \lambda$ (possibly depending on $n$) that satisfy the following properties:
%
%\begin{align}
%n\mathrm{\ is\ an \ integer\ going\ to\ infinity}\label{ncond}\\
%\frac{e\log(1/\beta)\beta^{1/(d-1)}}{d-1} & < 1 \label{betacond}\\
%0 < \beta & < \exp(1-d) \\
%1 < \kappa\\
%0< \lambda \\
%d \: \kappa^{\lambda  } \: \expected{\left(1-  \Upsilon(r) \right)^{\lambda}} < 1 \label{kappalambdanew}\\
%m^n J(\eta \beta ^n , \beta^n) - n \log d & \to \infty \label{etacond}\\
%1 < \eta & \le m^n/\beta^n \label{etabounds}\\
%t = m^n\mathrm{\ is\ an\ integer\ going\ to\ infinity}\label{tcond}
%\end{align}

The main theorem of this section is the following.

\begin{theorem}\label{mainmain}
Let  $r$ and $d$ be  fixed positive integers with $r<d$,
and let $S_t$ denote the size of the largest $r$-ary subtree of 
a random $d$-ary recursive tree at time $t$. 
Let $\tau,\kappa,\lambda$ be positive constants satisfying
\begin{align}
e\:d\:\log\tau & < (d-1) \tau^{1/(d-1)} \:,\label{m_cond}\\
d \, \kappa^{\lambda  } \: \expected{\left(1-  \Upsilon \right)^{\lambda}} & < 1 \:, \label{kappa_cond}
\end{align}
and let $n = \lfloor \log t / \log \tau \rfloor $.
%Assume \eqref{ncond}--\eqref{tcond} hold.
There exists a constant $K$ such that  eventually (for large $t$)
%\nc{Should we put 'for large $t$' in parentheses as in the definition? Otherwise it sounds a bit redundant.}
$$S_t \le K \left( r^{n} +  t \kappa^{-n} \right)\:.$$
\end{theorem}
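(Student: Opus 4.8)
The plan is to transfer the bound on limiting masses from Lemma~\ref{ABb} to the finite random tree $\Tcal_t$. First I would set up the coupling between $\Tcal_t$ and the weighted infinite tree of Section~\ref{sec:preliminaries}. For a vertex $\nu$ of the infinite $d$-ary tree, let $L_t(\nu)$ denote the number of leaves of $\Tcal_t$ lying in the subtree rooted at $\nu$. Since a subdivided leaf is chosen uniformly at random, among the $d$ children $\nu 1,\dots,\nu d$ of a fixed vertex the counts $\big(L_t(\nu 1),\dots,L_t(\nu d)\big)$ evolve as a $d$-colour Eggenberger--P\'olya urn starting from one ball of each colour and adding $d-1$ balls of the drawn colour per step. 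Hence the normalised counts converge almost surely, and their limit $X_{\nu i}\Def\lim_t L_t(\nu i)/L_t(\nu)$ has exactly the Dirichlet$(1/(d-1),\dots,1/(d-1))$ law of~\eqref{def_upsilon}; this identifies the abstract weights of Section~\ref{sec:preliminaries} with the recursive tree and makes $\Ma(\nu)=\lim_t L_t(\nu)/L_t(\r)$ the limiting fraction of leaves below $\nu$. I would also record that $t\mapsto L_t(\nu)/L_t(\r)$ is a bounded martingale converging to $\Ma(\nu)$.

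Next I would decompose any $r$-ary subtree $T$ of $\Tcal_t$ at the level $n=\lfloor\log t/\log\tau\rfloor$. The part of $T$ at levels $0,\dots,n-1$ is itself $r$-ary, so it has at most $\frac{r}{r-1}\,r^{n}$ vertices, which gives the $r^{n}$ term. For the part at levels $\ge n$, let $C$ be the set of level-$n$ vertices of $T$; then $C\in\Gcal_{n,r}$, and the number of vertices of $T$ below level $n$ is at most $\sum_{\nu\in C}(\text{number of vertices of }\Tcal_t\text{ rooted at }\nu)$. As every internal vertex of $\Tcal_t$ has exactly $d$ children, a subtree with $\ell$ leaves has at most $\frac{d}{d-1}\ell$ vertices, so this deep part is at most $\frac{d}{d-1}\sum_{\nu\in C}L_t(\nu)=\frac{d}{d-1}\,L_t(\r)\sum_{\nu\in C}\frac{L_t(\nu)}{L_t(\r)}$. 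Since $L_t(\r)=1+t(d-1)=\Theta(t)$, the theorem reduces to showing that, eventually and uniformly over all $C\in\Gcal_{n,r}$,
\begin{equation}
\label{plan:finite-mass}
\sum_{\nu\in C}\frac{L_t(\nu)}{L_t(\r)}\le K'\,\kappa^{-n}.
\end{equation}
Lemma~\ref{ABb} already yields $\max_{C\in\Gcal_{n,r}}\Ma(C)\le\kappa^{-n}$ for the limiting masses, so \eqref{plan:finite-mass} is precisely the statement that the finite-time masses do not exceed a constant multiple of the limiting masses in aggregate over $C$.

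To establish \eqref{plan:finite-mass} I would compare $L_t(\nu)$ with $t\,\Ma(\nu)$ through the continuous-time (Yule) embedding of $\Tcal_t$, in which every leaf waits an independent exponential time before branching into $d$ leaves. At the time $s$ corresponding to $\Tcal_t$ the subtree below $\nu$ has about $e^{(d-1)(s-s_\nu)}$ leaves, where $s_\nu$ is the birth time of $\nu$; since the total leaf count is $\approx e^{(d-1)s}$ and $\Ma(\nu)\approx e^{-(d-1)s_\nu}$, the ratio $L_t(\nu)/L_t(\r)$ is of the same order as $\Ma(\nu)$ up to a vertex-dependent factor of order one. The only level-$n$ vertices for which this comparison can fail badly are those born very late, that is, those still leaves at level $n$; their number has expectation $e^{(d-1)s}\,\bbP\big(\mathrm{Poisson}(ds)=n\big)$, which a Stirling estimate (using $e^{(d-1)s}\approx t$ and $n\approx\log t/\log\tau$) identifies, up to polynomial factors, with $\big(ed\log\tau/((d-1)\tau^{1/(d-1)})\big)^{n}$. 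Hypothesis~\eqref{m_cond} is exactly the statement that this base is smaller than $1$, so a first-moment bound controls the aggregate contribution of these vertices, while a union bound over the polynomially many ($d^{n}=t^{\log d/\log\tau}$) remaining level-$n$ vertices handles the well-behaved ones; together these give \eqref{plan:finite-mass}.

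I expect the main obstacle to be exactly this uniform transfer from limiting masses to finite-time masses over the growing level $n=n(t)$: one cannot simply invoke the vertexwise convergence $L_t(\nu)/L_t(\r)\to\Ma(\nu)$, since the number of relevant vertices grows with $t$ and the late-born, small-mass vertices have uncontrolled finite-time fluctuations. The dichotomy above is what resolves this, and it is here that the otherwise opaque inequality~\eqref{m_cond} enters, being nothing other than the requirement that the expected number of level-$n$ leaves decay geometrically in $n$. Once \eqref{plan:finite-mass} is in hand, combining the shallow and deep estimates gives $S_t\le K\big(r^{n}+t\kappa^{-n}\big)$ for a suitable constant $K$, as claimed.
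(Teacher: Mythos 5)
Your reduction matches the paper's proof in its architecture: the identification of the P\'olya-urn limits with the Dirichlet weights of Section~\ref{sec:preliminaries}, the split of an $r$-ary subtree at level $n$ into a shallow part of size $O(r^n)$ plus a deep part bounded by $\frac{d}{d-1}\sum_{\nu\in C}L_t(\nu)$ with $C\in\Gcal_{n,r}$, and the reduction to transferring Lemma~\ref{ABb} from limiting masses to time-$t$ masses are all exactly what the paper does (with its weight $\We(\nu,t)$ playing the role of your leaf counts); your Poisson--Stirling computation also correctly identifies the role of \eqref{m_cond}. The genuine gap is in the transfer step, which is the technical heart of the theorem. Your dichotomy --- that $L_t(\nu)/L_t(\r)\le K\,\Ma(\nu)$ can fail only for level-$n$ vertices that are still leaves --- is false, and the union bound you invoke for the remaining vertices is unsupported. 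In your Yule embedding, $\Ma(\nu)$ is proportional to $W_\nu e^{-(d-1)s_\nu}$, where $W_\nu$ is the martingale limit attached to the subtree below $\nu$; this limit has a Gamma-type law with $\bbP(W_\nu\le\eps)\asymp\eps^{1/(d-1)}$, so an \emph{internal} vertex of any age violates the comparison as soon as $W_\nu$ is atypically small relative to its finite-time leaf count. Hence your ``vertex-dependent factor of order one'' has only a polynomially decaying tail, $\bbP(\mathrm{factor}\ge K)\gtrsim K^{-1/(d-1)}$: for a \emph{constant} $K$ (which is what the theorem demands) this does not even tend to $0$ as $n$ grows, so a union bound over the $d^n$ vertices of level $n$ is hopeless. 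A lesser issue: your first-moment bound controls the \emph{number} of late-born vertices, not their contribution to $\sum_{\nu\in C}L_t(\nu)/L_t(\r)$; that one is repairable, since geometric decay of the expectation implies that eventually no such vertex exists.

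What fills this gap in the paper is a finite-$t$ tool your sketch does not have, together with a different dichotomy. Lemma~\ref{Nk1} uses Pemantle's mixture-of-binomials representation of the Eggenberger--P\'olya urn to show that, for \emph{every} finite $t$ and conditionally on $\Ma(\nu)$, the weight $\We(\nu,t)$ is stochastically dominated by a Binomial$(t,\Ma(\nu))$ random variable --- there is no residual random factor left to control. The per-vertex failure event is then split at the mass threshold $\beta^n$, $\beta=1/\tau_1$ with $\tau_1<\tau$, rather than at leaf/non-leaf status: $\bbP\big(\We(\nu,t)\ge 2t\Ma(\nu)\big)\le\bbP\big(\Ma(\nu)<\beta^n\big)+\exp\big(-t\beta^n\big)$, where the Chernoff term is super-exponentially small because $t\beta^n\ge(\tau/\tau_1)^n$, and Lemma~\ref{productbetaproperties} (a Chernoff-type lower-tail bound for products of independent Beta$(1/(d-1),1)$ variables) shows that $\bbP\big(\Ma(\nu)<\beta^n\big)$ beats the $d^n$ union bound precisely because of \eqref{m_cond}. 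This threshold dichotomy covers \emph{all} small-mass vertices, leaves or not, and is the rigorous version of your Poisson heuristic; without it (or an equivalent uniform finite-$t$ estimate), your uniform bound $\sum_{\nu\in C}L_t(\nu)/L_t(\r)\le K'\kappa^{-n}$ is not established, and the theorem does not follow.
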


Before proving this theorem, we show it implies Theorem~\ref{thm:maximalsubtree}.

\begin{proof}[Proof of Theorem~\ref{thm:maximalsubtree}.]
We show there exist positive constants
$\tau,\kappa,\lambda$ satisfying
\eqref{m_cond}, \eqref{kappa_cond}, and $\kappa>1$;
then we would have $\tau > e^{d-1} > r$, and
the theorem follows from Theorem~\ref{mainmain} by choosing any
$\delta \in \big(\max\{1 - \log \kappa / \log \tau,\log r / \log \tau\}, 1\big)$.
As \eqref{m_cond} holds for all large enough $\tau$,
it suffices to show there exist $\kappa>1$ and $\lambda>0$ satisfying
\eqref{kappa_cond}.
Since $\lim_{\eps\to 0 }\prob{\Upsilon<\eps}=0$,
we have
$\expected{\left(1-  \Upsilon \right)^{\lambda}}\to 0$
as $\lambda\to\infty$.
In particular, there exists $\lambda>0$ such that 
$\expected{\left(1-  \Upsilon \right)^{\lambda}} <  1/d$.
Then, we can let 
$$\kappa
= \left(d \expected{\left(1-  \Upsilon \right)^{\lambda}}\right)^{-1/(2\lambda)}\:,
$$
which is strictly larger than 1.
%We now prove the theorem.
%Let $\delta$ be such that $1 - \log \kappa / \log m < \delta < 1$.
%Given $t$, let $n$ be a positive integer 
%such that $m^{n-1} < t \le m^n$.
%By Theorem~\ref{mainmain} we have
%\begin{align*}
%S_t \le S_{m^n} & \le O\left(r^n + \eta (m/\kappa)^{n} \right)
%\le t^{\delta} \:. \qedhere 
%\end{align*}
\end{proof}
 
In the rest of this section, $\Tcal$ denotes an infinite $d$-tree rooted at $\r$. We view the random recursive $d$-ary tree $\Tcal_t$ as a subtree of $\Tcal$.
At each time step, we assign a \emph{weight} to each vertex.  For each   $t$ and   each  vertex $\nu$ of $\Tcal_t$, define  
$ \aleph(\nu, t)$ to be the number of  vertices $\mu$ of $\Tcal_t$ such that $\nu \in [\r, \mu]$. This is the number of vertices in the ``branch'' of $\Tcal_t$ containing $\nu$, including $\nu$.
 Set
$$\We(\nu, t) \Def \frac{\aleph(\nu, t) -1}d $$
if $\nu \in V(\Tcal_t)$,
and $\We(\nu, t) \Def 0$ if $\nu \notin V(\Tcal_t)$. 
Note that this is the number of non-leaf vertices in this branch at time $t$. 
\begin{lemma}\label{Nk1}
There exist random variables $\{B_{\nu}\}_{{\nu}\in V(\Tcal)}$, with $B_\r=1$, such that for any positive integer $t$ and any ${\nu}\in V(\Tcal)$,
$\We({\nu}, t)$ is stochastically dominated by a  Binomial$\left(t, \prod_{\s \in [\r, {\nu}]} B_{\s}\right)$ random variable. 
%Moreover, for all $\mu \in V(\Tcal)$, $B_\s$ are independent for $\s \in [\r, \mu]$. 
Moreover, the vectors $(B_{\nu 1}, B_{\nu 2}, \ldots, B_{\nu d})$ are independent for $\nu \in V(\Tcal)$, and are distributed as Dirichlet$(1/(d-1), 1/(d-1), \ldots, 1/(d-1))$.
\end{lemma}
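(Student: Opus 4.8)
The plan is to expose the Eggenberger--P\'olya urn hidden in the construction. Fix an internal vertex $\mu$ of $\Tcal$ with children $\mu1,\dots,\mu d$. Since the tree is $d$-ary and every internal vertex has exactly $d$ children, a branch rooted at a vertex with $w$ internal vertices contains exactly $(d-1)w+1$ leaves; in particular each child subtree $\mu i$ contributes $(d-1)\We(\mu i,t)+1$ leaves to the branch of $\mu$. Selecting a uniformly random leaf of $\Tcal_t$ can be performed hierarchically (choose a child subtree with probability proportional to its number of leaves, then recurse), so the sequence recording which child subtree of $\mu$ receives each successive new internal vertex is exactly a P\'olya urn with $d$ colours, one ball of each colour initially, and reinforcement $d-1$ at every draw. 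I would define $(B_{\mu1},\dots,B_{\mu d})$ to be the almost sure limit of the colour frequencies of this urn; the classical convergence theorem for such urns identifies this limit as Dirichlet$(1/(d-1),\dots,1/(d-1))$, which is the asserted marginal law, and I set $B_\r\Def1$.

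The second ingredient is the de Finetti representation of the P\'olya urn together with the independence of the urns sitting at distinct vertices. The colour sequence of each urn is exchangeable, so conditionally on its limit $(B_{\mu1},\dots,B_{\mu d})$ the draws are i.i.d.\ with $\bbP(\text{colour }i)=B_{\mu i}$. The hierarchical description of leaf selection also shows that, conditionally on the selections made above $\mu$, the process inside the branch of $\mu$ evolves as a fresh urn driven only by selections that enter that branch; consequently the urns attached to different vertices are mutually independent, which yields the joint independence of the vectors $(B_{\nu1},\dots,B_{\nu d})$ claimed in the lemma. A point worth checking here is that the number $M$ of draws the $\mu$-urn has made by time $t$, namely $\We(\mu,t)-1$, depends on the higher levels only through the total number of leaves of the branch of $\mu$, which is a function of $\We(\mu,t)$ alone and not of how the internal vertices split among the children; hence, given all the $B$'s, the count $M$ is independent of the colours of those draws.

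With this in hand, the stochastic domination follows by induction on $|\nu|$. For the root $\We(\r,t)=t$ deterministically, matching Binomial$(t,1)$. For the inductive step, writing $\mu=\parent{\nu}$, the number of internal vertices of the branch of $\nu$ is the number of $\mu$-urn draws of the colour pointing toward $\nu$, so conditionally on the $B$'s and on $\We(\mu,t)$ it is distributed as Binomial$\big(\We(\mu,t)-1,\,B_\nu\big)$, where $B_\nu$ is the relevant component of $\mu$'s Dirichlet vector. Using Binomial$(m-1,q)\preceq$ Binomial$(m,q)$, the induction hypothesis that $\We(\mu,t)$ is stochastically dominated by Binomial$\big(t,\prod_{\s\in[\r,\mu]}B_\s\big)$, the monotonicity of $m\mapsto$ Binomial$(m,q)$, and the thinning identity Binomial$\big(\text{Binomial}(t,p),q\big)=$ Binomial$(t,pq)$, I would conclude that $\We(\nu,t)$ is stochastically dominated by Binomial$\big(t,\prod_{\s\in[\r,\nu]}B_\s\big)$, as required.

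The main obstacle I expect is not the domination induction, which is routine, but rather making the urn picture fully rigorous: one must justify the hierarchical leaf-selection rule, extract the almost sure Dirichlet limits with the correct parameters, and above all prove that the urns at different vertices are jointly independent and that, given the limits, each urn produces an i.i.d.\ colour sequence independent of the random number of times it is invoked. Once these structural facts are established, the bookkeeping that turns ``$\We(\mu,t)-1$ draws'' into the stated binomial domination is straightforward.
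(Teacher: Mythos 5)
Your proposal is correct and follows essentially the same route as the paper: both identify the Eggenberger--P\'olya urn (initial composition one ball per colour, reinforcement $d-1$) governing how new internal vertices split among a vertex's child subtrees, both use the exchangeability/de Finetti representation of that urn (the paper cites Pemantle's Lemma~1 for the statement that the colour counts are binomial mixtures with respect to a Beta, equivalently Dirichlet, limit), and both finish by the same induction, compounding Binomial$(\We(\parent{\nu},t)-1,B_\nu)$ with the inductive bound on $\We(\parent{\nu},t)$ via binomial thinning and monotonicity. The only cosmetic difference is that the paper phrases each vertex's urn as a two-colour urn ($\nu$ versus its siblings, initial conditions $(1,d-1)$) and invokes the $d$-colour representation separately for the Dirichlet joint law, whereas you work with the $d$-colour urn throughout; this changes nothing of substance.
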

\begin{proof} 
%\replace{If $\nu \notin V(\Tcal_t)$ then there is nothing to prove.   Hence,  fix a vertex $\nu \in V(\Tcal_t)$.}
{Consider a vertex $\nu\ne \r$ and a positive integer $t$ such that $\nu \in V(\Tcal_t)$.}
Note that {at time $t$,} the number of leaves in the branch at $v$ is $(d-1)\We(\nu, t)+ 1$. Hence, given that at time $t+1$   the weight of $\parent{\nu}$ increases, the probability, conditional on the past,  that the weight of $\nu$ increases at the same time, is equal to
$$ \frac{(d-1)\We(\nu, t)+ 1}{(d-1)\We(\parent{\nu}, t)+1}.$$
Each time a weight increases, its increment is exactly 1.  
By identifying $\nu$ with one colour and its siblings with another colour, %\replace{this}
{the evolution of the numerator of the above expression over time} can be modelled using an Eggenberger-P\'olya urn, with initial conditions $(1, d-1)$  and reinforcement equal to $d-1$.
Moreover, the urns corresponding to distinct vertices are mutually independent.

% More precisely, for any fixed vertex $\nu$ repeat the following. 
%Denote by $\nu i$, with $ i \in \{1, 2, 3\}$, its offspring and we focus on a fixed vertex $\mu \in \big\{ \nu i,\, i \in \{1, 2, 3\}\big\}.$  Define $S(0) = T(\mu)$  and 
%%%$$S(i) = \inf \{ j > S(i-1) \colon \We(\nu, j) - \We(\nu, S(i-1)) =3\}.$$
%%%
%%Define 
%%$$ \xi_i =
%%\begin{cases}
%%1 & \qquad \mbox{if } \We\big(\mu, S(i)\big) - \We\big(\mu, S(i-1)\big)= 3 \\
%%0 & \qquad \mbox{if } \We\big(\mu, S(i)\big) - \We\big(\mu, S(i-1)\big)= 0.
%%\end{cases}
%%$$
%%The sequence $\{\xi_i, i \ge 1\}$ share the same distribution as the sequence $\{\txi_i, i \ge 1\}$ defined as follows. This sequence is generated by a Polya urn where initially the composition  of two white balls and one red ball. 

%\abas{I am not sure about the following sentence.}
%\replace{$\aleph(\nu, t)$ over time}
The limiting distribution describing the Eggenberger-P\'olya urn is 
%{$\We(\nu, t)$ for large $t$} is 
well known, but we require bounds applying for all $t$. 
To this end, we can express the number of times a given colour is chosen by time $t$  in an Eggenberger-P\'olya urn as a mixture of binomials with respect to a Beta distribution. 
See, for example, {Pemantle}~\cite[Lemma~1]{PE88}.
In this case, given the initial conditions $(1, d-1)$ and reinforcement $d-1$, the mixture is with respect to Beta($1/(d-1), 1$). 
%\abas
{This means that}
to each vertex  $\nu$ we can assign a random variable $B_\nu$ distributed as  Beta($1/(d-1),  1$), 
%\replace{which describes the probability that the given vertex increases its weight by one given that its parent increases its weight.
%These variables are independent for each $\nu$. 
%Then  the random variable $\We(\nu, t)$, conditional upon  $B_{\nu}$,}
{such that $\We(\nu, t)$ conditional on $B_{\nu}$}
%$$ 
%\bbP\big(\txi_{i_1} = x_1, \txi_{i_2} = x_2, \ldots, \txi_{i_k} = x_{k}\big) = \int_0^1 p^{k \overline{x}_k} (1-p)^{k (1- \overline{x}_k)} \frac{\Gamma(1)}{\Gamma(1/3) \Gamma(2/3)} p^{-2/3} (1-p)^{-1/3} \d p,
%$$
%where $\overline{x}_k = (1/k) \sum_{s=1}^k x_s.$
is binomially distributed with parameters $\We(\parent{\nu}, t)-1$ and $B_{\nu}$. 
%\abas
%Moreover, the $B_{\nu}$'s are independent.
Set $B_\r=1$ and note that $\We(\r, t)=t$. By induction, %\replace{we have that}{} 
$\We(\nu, t)$, conditional on 
%\replace{$B_{\s}$  with $\s \in [\r, \mu]$}
{$\{B_{\s}\}_{\s \in [\r, \nu]}$}, is stochastically smaller than a Binomial$\left(t, \prod_{\s \in [\r, \nu]} B_{\s}\right)$. 
%  Notice that if $Z_n$, $n\ge 1$ is a family of bin($n$,  $p$), then $Z_n -1 $ is stochastically smaller than $Z_{n-1}$. 
%Using this fact, and  The latter, in turns, is stochastically dominated by Bin$(n, \prod_{\s \in [\r, \mu]} B_{\s})$. 

By the Eggenberger-P\'olya urn representation we also infer that the joint distribution of $(B_{\nu 1}, B_{\nu 2}, \ldots ,B_{\nu d})$ is Dirichlet$(1/(d-1), 1/(d-1), \ldots, 1/(d-1))$ for all $\nu$. 
See, for example,~\cite[Lemma~1]{PE88}.
\end{proof}

%Let
%$$ J^{*}(x, y)  \Def 
%\begin{cases}
%J(x, y) &\mbox{ if $(x,y) \in [0,1]^2$}\\
%+\infty  &\mbox{ otherwise}.
%\end{cases}
%$$
%Recall that  $J(x,y)$ was defined in \eqref{bluejay}.
%By differentiating, one can see that for every $c>e$ we have that
%$J^*(cx,x)$ is non-decreasing in $x\in[0,1]$.

\begin{lemma}
\label{productbetaproperties}
Let $B_1,\dots,B_n$ be independent Beta$(1/(d-1),1)$ random variables.
For all positive $\beta$ we have
$$\mathbb{P}\left(\prod_{i=1}^{n} B_{i} \le \beta^{n}\right) \le 
\left(
\frac{e\log(1/\beta)\beta^{1/(d-1)}}{d-1}
\right)^n.
$$
\end{lemma}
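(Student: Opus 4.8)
The plan is to control the lower tail of the product $\prod_{i=1}^n B_i$ by applying Markov's inequality to a negative moment. Since each $B_i$ is positive, for any $s>0$ the event $\{\prod_i B_i\le\beta^n\}$ coincides with $\{(\prod_i B_i)^{-s}\ge\beta^{-sn}\}$, so Markov's inequality together with independence gives
$$\mathbb{P}\left(\prod_{i=1}^n B_i\le\beta^n\right)\le \beta^{sn}\,\mathbb{E}\left[\Big(\prod_{i=1}^n B_i\Big)^{-s}\right]=\left(\beta^{s}\,\mathbb{E}[B_1^{-s}]\right)^n.$$
Thus the whole problem reduces to computing a single negative moment and then choosing the free parameter $s$ optimally.

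The second step is to evaluate $\mathbb{E}[B_1^{-s}]$ explicitly. Using $\Gamma(z+1)=z\Gamma(z)$, the Beta$(1/(d-1),1)$ density from the Definition above simplifies to $\tfrac{1}{d-1}x^{1/(d-1)-1}$ on $(0,1)$, so for $0<s<1/(d-1)$ a direct integration yields $\mathbb{E}[B_1^{-s}]=\tfrac{1}{1-(d-1)s}$, while the integral diverges once $s\ge 1/(d-1)$. The per-factor bound therefore becomes $g(s)=\beta^{s}/(1-(d-1)s)$, to be minimised over the admissible range $s\in(0,1/(d-1))$.

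The final step is the one-variable optimisation. Solving $(\log g)'(s)=0$ gives the stationary point $s^{\ast}=\tfrac{1}{d-1}-\tfrac{1}{\log(1/\beta)}$, and substituting it back, using the identity $\beta^{-1/\log(1/\beta)}=e$, collapses $g(s^{\ast})$ to exactly $\tfrac{e\log(1/\beta)\,\beta^{1/(d-1)}}{d-1}$, the claimed per-factor constant; raising to the $n$th power finishes the proof. The step I expect to require the most care is precisely this optimisation together with its admissibility constraint: keeping $s$ strictly below $1/(d-1)$ is essential for the negative moment to be finite, and $s^{\ast}$ lies in $(0,1/(d-1))$ exactly when $\log(1/\beta)>d-1$. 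This is the regime in which the asserted constant is genuinely smaller than $1$ and the estimate is informative, so I would carry out the argument under this condition. As an equivalent route and a sanity check, one may write $B_i=U_i^{\,d-1}$ for i.i.d.\ uniform $U_i$, which turns the claim into a standard Chernoff large-deviation bound for the Gamma$(n,1)$ variable $\sum_i(-\log U_i)$ and reproduces the same optimiser.
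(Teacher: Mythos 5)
Your argument is, at its core, identical to the paper's own proof: the paper likewise applies Markov's inequality to the product raised to a negative power $\lambda\in(-1/(d-1),0)$, computes $\mathbb{E}[B_1^{\lambda}]=1/((d-1)\lambda+1)$, and optimizes at $\lambda=-1/(d-1)-1/\log\beta$, which is exactly your $s^{\ast}$ with the sign flipped; your final simplification via $\beta^{-1/\log(1/\beta)}=e$ is the same computation that produces the stated constant.

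The one divergence is how the admissibility constraint $s<1/(d-1)$ is reconciled with the words ``for all positive $\beta$'' in the statement. You restrict to $\log(1/\beta)>d-1$, i.e.\ $\beta<e^{1-d}$, and leave larger $\beta$ unaddressed, so strictly speaking you prove a restricted version of the lemma. The paper instead disposes of the case $\beta\ge e^{1-d}$ by asserting that the right-hand side is then at least $1$. But that assertion is actually incorrect: writing $u=\log(1/\beta)$, the per-factor constant is $e\,u\,e^{-u/(d-1)}/(d-1)$, which over $u>0$ is maximized at $u=d-1$ with value exactly $1$; hence the right-hand side is at most $1$ on all of $(0,1)$ and is strictly below $1$ whenever $\beta\ne e^{1-d}$. (For the same reason, your side remark that $\beta<e^{1-d}$ is ``exactly'' the regime where the constant is below $1$ is inaccurate.) In fact the lemma as literally stated fails for every fixed $\beta\in(e^{1-d},1)$: for $d=2$ the $B_i$ are uniform, the left-hand side tends to $1$ by the law of large numbers, while the right-hand side tends to $0$ as $n\to\infty$. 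So your restriction is not a removable convenience but essentially forced by the truth of the statement, and it is harmless for the paper's purposes: in the proofs of the main theorems the lemma is applied with $\beta=1/\tau_1$ where $\tau_1$ can be taken larger than $e^{d-1}$, squarely inside the regime that you (and the paper, validly) cover.
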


\begin{proof}
If $\beta \ge e^{1-d}$ then
the right-hand side is at least 1, so we may assume that
$0<\beta<e^{1-d}$.
We use Chernoff's technique.
Let $\lambda \in(-1/(d-1),0)$ be a parameter which will be specified later.
We have
$$ \mathbf{E}[B_1^{\lambda}] =  \frac {\Gamma(d/(d-1))}{\Gamma(1/(d-1)) \Gamma(1)} \int_{0}^{1} x^{\lambda} x^{- 1+1/(d-1)}  \d x=  \frac{1}{(d-1) \lambda +1}\:.$$
Hence by Markov's inequality 
%\abas
{and since the $B_i$ are independent,}
\begin{align*} 
\mathbb{P}\left(\prod_{i=1}^{n} B_{i} \le \beta^{n}\right)
 = \mathbb{P}\left(\prod_{i=1}^{n} B_{i}^{\lambda} \ge \beta^{\lambda n}\right) 
\le 
\prod_{i=1}^{n} \frac{\expected{B_1^{\lambda}}}{\beta^{\lambda}}
=
\left(
\frac{1}{\beta^{\lambda}((d-1) \lambda +1)}
\right)^n.
\end{align*}
To %\replace{optimize this}
{minimize the right-hand side} we choose
% $\lambda = (-(d-1)-\log \beta )/((d-1)\log \beta)$,
$\lambda =  -1/(d-1)-1/\log \beta $,
which is in the correct range since 
$0<\beta<e^{1-d}$.
This gives
\begin{align*}
\mathbb{P}\left(\prod_{i=1}^{n} B_{i} \le \beta^{n}\right) &\le 
\left(
\frac{e\log(1/\beta)\beta^{1/(d-1)}}{d-1}
\right)^n\:.\qedhere
\end{align*}
\end{proof}

We now prove Theorem~\ref{mainmain}.

\begin{proof}[Proof of Theorem~\ref{mainmain}.]
Let $\{B_\nu\}_{\nu\in V(\Tcal)}$ be as given by Lemma~\ref{Nk1}.
Denote by
 $\Gcal_{n, r}$ the collection of subsets of the vertices of $\Tcal$ at level $n$, with the property that they belong to the same $r$-ary  subtree. 
We apply Lemma~\ref{ABb} with $\Ma$ defined using $X_\sigma=B_\sigma$.   
%\replace{\eqref{kappa_cond}}
{Since \eqref{kappa_cond} holds}, 
we {conclude that}
%{have}
eventually for large $n$,
\begin{equation}\label{ABb1}
 \max_{C \in \Gcal_{n, r}}  \Ma(C) \le \kappa^{-n}.
\end{equation}

%\begin{align}
%{e d m^{1/(1-d)}\log m} & < d-1 \label{m_cond}\\
%d \: \kappa^{\lambda  } \: \expected{\left(1-  \Upsilon(r) \right)^{\lambda}} & < 1 \:, \label{kappa_cond}
%\end{align}

%Let $t=m^n$.
By Lemma~\ref{Nk1}, $\We(\nu, t)$ is stochastically dominated by a Binomial$(t, \Ma(\nu))$. 
Chernoff's bound for binomials implies (see, e.g.,~\cite[Theorem~2.3(b)]{concentration})
$$
\prob{\We(\nu,t) \ge 2 t \Ma(\nu) \:|\: \Ma(\nu) \ge q}
\le \exp(-tq) \:,
$$
for  every positive $q$.

Since $\tau$ satisfies \eqref{m_cond}, there exists $\tau_1<\tau$ satisfying
\begin{equation} 
{e d \log \tau_1} < (d-1)\tau_1^{1/(d-1)} \:. \label{m1_cond}
\end{equation}
Let $\beta = 1 / \tau_1$.
By Lemma~\ref{productbetaproperties}, for any vertex $\nu$ at level $n$  
$$
\bbP(\Ma(\nu)  <  \beta ^{n}) \le \left(
\frac{e\log(1/\beta)\beta^{1/(d-1)}}{d-1}
\right)^n \:.
$$
Note that \eqref{m1_cond} implies that the term in brackets is a constant smaller than $1/d$.

We have 
\begin{equation*}
\begin{aligned}
 \bbP\Big(\bigcup_{\mu \colon |\mu|=n}   \big\{\We(\mu, t) \ge   2 t \Ma(\mu) \big\}\Big) 
 &\le  d^{n} \bbP\big( \We(\nu , t) \ge 2 t \Ma(\nu) )\\
 &\le  d^n\bbP(\Ma(\nu)  <  \beta ^{n}) + 
 d^{n} \exp\left(-t\beta^n\right). \\
%&\le 
%\left(
%\frac{ed\log(1/\beta)\beta^{1/(d-1)}}{d-1}
%\right)^n
% + 
% d^{n} \exp\Big\{- m^n J \big(2 \beta^n, \beta^n\big)\Big\}. 
\end{aligned}
\end{equation*}
The last expression is summable in $n$, 
as $t^{1/n}\beta \ge \tau\beta$, and $\tau\beta$ is a constant larger than~1.
%\eqref{betacond} and \eqref{etacond}. 
By the first Borel-Cantelli lemma and \eqref{ABb1}, there exists 
a constant $K$ such that eventually for large $t$ we have
$S_t  \le K\left( r^{n} + t \kappa^{-n} \right)$.
\end{proof}

\section{Longest paths in random Apollonian networks}
\label{sec:rans}
We define a tree $\Tcal_t$, called  the \emph{$\triangle$-tree} of $\R$, as follows.
There is a one to one correspondence between the triangles in $\R$ and the vertices of $\Tcal_t$.
For every triangle  $\triangle$ in $\R$,
we denote its corresponding vertex in $\Tcal_t$ by $\node{\triangle}$.
To build $\Tcal_t$, start with a single root vertex $\r$, which   corresponds to the initial triangle of $RAN_t$.
Wherever a triangle $\triangle$ is subdivided into triangles $\triangle_1$, $\triangle_2$, and $\triangle_3$,
generate three {offspring} $\node{\triangle_1}$, $\node{\triangle_2}$, and $\node{\triangle_3}$ for $\node{\triangle}$,
and extend the  correspondence in the natural manner.
Note that $\Tcal_t$  is a random $3$-ary recursive tree 
as defined in 
%\replace{Section~\ref{sec:r-ary}}
{the introduction} 
%with each vertex having either zero or three \replace{children}{offspring},
and $\Tcal_t$ has $3t+1$ vertices and $2t+1$ leaves.
The leaves of $\Tcal_t$ correspond to the bounded faces of $\R$.
Let $\Tcal$ denote an infinite $3$-ary tree rooted at $\r$.
We view $\Tcal_{t}$ as a subtree of $\Tcal$.
For each vertex $\nu \in V(\Tcal)$, the \emph{grand-offspring} of $\nu$ are its descendants at level $|\nu|+2$. 
For a triangle $\triangle$ in $\R$, $I(\triangle)$ denotes the set of vertices of $\R$ that are \emph{strictly inside} $\triangle$.

The following lemma was proved in \cite[Lemma~3.1]{we_rans},
%\abas
{and introduces the connection with the setup in Section~\ref{sec:r-ary}.}
\begin{lemma}
Let $\Tcal_{t}$ be  the  $\triangle$-tree  of $\R$.
Let $\node{\triangle}$ be a vertex of $\Tcal_t$ with nine grand-offspring $\node{\triangle_1},\node{\triangle_2},\dots,\node{\triangle_9}$ in $V(\Tcal_t)$.
Then the vertex set of a path in $\R$ intersects at most eight of the $I(\triangle_i)$'s.
\label{lem:9children}
\end{lemma}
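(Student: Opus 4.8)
The plan is to exploit the planarity of $\R$ through a small vertex separator. First I would make the geometry of the nine grand-offspring explicit. Subdividing $\triangle$ (with vertices $A,B,C$) inserts a center $P$ and produces the three offspring triangles $ABP$, $BCP$, $CAP$; subdividing each of these in turn inserts centers $P_1,P_2,P_3$ and produces the nine grand-offspring $\triangle_1,\dots,\triangle_9$, whose open interiors partition the interior of $\triangle$. The crucial observation is that the three corners of \emph{every} grand-offspring triangle belong to the seven-element set
$$ S = \{A,B,C,P,P_1,P_2,P_3\}, $$
so $|S| = 7$ no matter which nine triangles arise.

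Next I would establish a separation property by a Jordan-curve argument. For each $\triangle_i$ the three bounding edges form a triangle (a $3$-cycle) in the planar graph $\R$, and this cycle separates the plane, with $I(\triangle_i)$ lying strictly inside it. Because $\R$ is planar and no vertex of a random Apollonian network is ever placed in the relative interior of an existing edge, any edge of $\R$ incident to a vertex of $I(\triangle_i)$ must have its other endpoint either in $I(\triangle_i)$ or at one of the three corners of $\triangle_i$; otherwise it would cross the bounding cycle. Consequently, if two vertices $u,v\notin S$ are adjacent in $\R$, they must lie in the same region $I(\triangle_i)$: distinct grand-offspring triangles have disjoint interiors, and a vertex lying outside $\triangle$ cannot be joined to one strictly inside $\triangle$ without routing through $A$, $B$, or $C$. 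In short, consecutive non-separator vertices along any walk never change region.

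Finally I would run a counting argument on the path. Let $\Pcal$ be a (simple) path in $\R$. At most $|S| = 7$ of its vertices lie in $S$, so deleting those vertices splits $\Pcal$ into at most $8$ subpaths, each using only vertices outside $S$. By the previous step every such subpath lies entirely within a single region $I(\triangle_i)$ (or avoids all nine), so the vertex set of $\Pcal$ meets at most $8$ of the sets $I(\triangle_1),\dots,I(\triangle_9)$, which is the claim; empty subpaths and repeated visits to a single region only decrease the count. The only delicate points are the two planarity steps—that the bounding triangle of each $\triangle_i$ genuinely encloses $I(\triangle_i)$ with its three corners as the sole gateways in and out—so the main effort goes into justifying this separation cleanly, while the reduction from $7$ separators to $8$ regions is elementary bookkeeping.
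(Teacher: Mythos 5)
Your proof is correct. Note that the paper itself does not prove this lemma but quotes it from \cite[Lemma~3.1]{we_rans}, and your argument---the seven vertices $A,B,C,P,P_1,P_2,P_3$ contain all corners of the nine grand-offspring triangles, a simple path loses at most seven vertices when these are deleted and thus breaks into at most eight subpaths, and each subpath is confined to a single $I(\triangle_i)$ because the bounding $3$-cycle of $\triangle_i$ is a planar separator whose only gateways are its three corners---is essentially the standard separator-counting proof given in that reference.
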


We say that a finite subtree $\Jcal$ of $\Tcal$ is \emph{buono} if each vertex of $\Jcal$   has at most eight grand-offspring in $\Jcal$.  

Assume the four vectors $(A_1,A_2,A_3)$, $(B_1,B_2,B_3)$, $(B_4,B_5,B_6)$ and $(B_7,B_8,B_9)$ are i.i.d\ random vectors distributed as Dirichlet$(1/2, 1/2, 1/2)$. Define the random variable 
%$\Upsilon$ as
\begin{equation}
\label{def_upsilonA}
\Upsilon = \min \{ A_i B_j : 1\le i \le 3, 1 \le j \le 9\}\:.
\end{equation}
%In this section we want to choose parameters $n$ and $\beta,m,\eta,\kappa, \lambda$ (possibly depending on $n$) that satisfy the following properties: 
%\begin{align}
%n\mathrm{\ is\ an\ even\ integer\ going\ to\ infinity}\label{ncondA}\\
%\frac{3e}{2} \log(1/\beta) \sqrt{\beta} & < 1 \label{betacondA}\\
%0 < \beta & < \exp(-2) \\
%1&< \kappa \\
%0 &< \lambda \\
%9 \:  \kappa^{\lambda  } \: \expected{\left(1-  \Upsilon \right)^{\lambda}} &< 1 \label{kappalambdanewA}\\
%m^n J(\eta \beta ^n , \beta^n) - n \log 3 & \to \infty \label{etacondA}\\
%1 < \eta & \le m^n/\beta^n \label{etaboundsA}\\
%t = m^n\mathrm{\ is\ an\ integer\ going\ to\ infinity}\label{tcondA}
%\end{align}
The main theorem of this section is the following.

\begin{theorem}\label{mainmainA}
Let $\tau,\kappa,\lambda$ be positive constants satisfying
\begin{align}
3e \log\tau & < 2\sqrt {\tau} \:,\label{m_cond_rans}\\
9 \:  \kappa^{\lambda  } \: \expected{\left(1-  \Upsilon \right)^{\lambda}} &< 1 \label{kappa_cond_rans}\:,
\end{align}
and let $n$ be the largest even integer smaller than $\log t / \log \tau$.
Then, there exists a constant $ K$, such that  eventually for large $t$, the largest  buono subtree of $\Tcal_t$ has at most
$K\left( 8^{n/2} +  t \kappa^{-n/2} \right)$
vertices.
\end{theorem}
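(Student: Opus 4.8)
The plan is to run the proof of Theorem~\ref{mainmain} with $d=3$, but applied to the \emph{grand-offspring tree} of $\Tcal$ rather than to $\Tcal$ itself. First I would invoke Lemma~\ref{Nk1} with $d=3$ to obtain random variables $\{B_\nu\}_{\nu\in V(\Tcal)}$ with $B_\r=1$ such that $\We(\nu,t)$ is stochastically dominated by Binomial$(t,\Ma(\nu))$, where $\Ma(\nu)=\prod_{\sigma\in[\r,\nu]}B_\sigma$, and such that the vectors $(B_{\nu1},B_{\nu2},B_{\nu3})$ are i.i.d.\ Dirichlet$(1/2,1/2,1/2)$. The key observation is that keeping only the even levels of $\Tcal$ produces a $9$-ary tree $\Tcal^{(2)}$ in which the nine grand-offspring of a vertex have relative masses exactly the products $A_iB_j$ of~\eqref{def_upsilonA}: here $(A_1,A_2,A_3)$ is the splitting vector at the vertex and $(B_1,B_2,B_3),(B_4,B_5,B_6),(B_7,B_8,B_9)$ are the splitting vectors at its three children, all i.i.d.\ Dirichlet$(1/2,1/2,1/2)$ by Lemma~\ref{Nk1}. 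These nine relative masses sum to $1$ and their vectors are i.i.d.\ across $V(\Tcal^{(2)})$, so the hypotheses of Section~\ref{sec:preliminaries} hold for $\Tcal^{(2)}$ with $d'=9$ and $r'=8$, and the quantity $\Upsilon$ of~\eqref{def_upsilonfirst} (a minimum over sums of $d'-r'=1$ grand-offspring masses) coincides with the $\Upsilon$ of~\eqref{def_upsilonA}.

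Next I would record the combinatorial translation: restricting a buono subtree to its vertices at the even levels yields an $8$-ary subtree of $\Tcal^{(2)}$, since the buono condition says at most eight of the nine grand-offspring are retained at each vertex. Hence the level-$n$ vertices of any buono subtree form a set $C$ in the family $\Gcal_{n/2,8}$ associated to $\Tcal^{(2)}$, consisting of at most $8^{n/2}$ vertices lying in a common $8$-ary subtree. Since $n$ is the largest even integer below $\log t/\log\tau$, the level $n/2$ in $\Tcal^{(2)}$ is a positive integer tending to infinity, and condition~\eqref{kappa_cond_rans} is precisely~\eqref{kappalambdacond} for $\Tcal^{(2)}$; thus Lemma~\ref{ABb} gives that eventually for large $n$,
$$\max_{C\in\Gcal_{n/2,8}}\Ma(C)\le\kappa^{-n/2}.$$

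For the tail bound I would transcribe the proof of Theorem~\ref{mainmain} with $d=3$. Using~\eqref{m_cond_rans} I pick $\tau_1<\tau$ with $3e\log\tau_1<2\sqrt{\tau_1}$, set $\beta=1/\tau_1$, and apply Lemma~\ref{productbetaproperties} (whose factors are Beta$(1/2,1)$) to bound $\bbP(\Ma(\nu)<\beta^n)$ by $c^n$ for a constant $c<1/3$; combined with the Chernoff estimate $\bbP(\We(\nu,t)\ge 2t\Ma(\nu)\mid\Ma(\nu)\ge q)\le\exp(-tq)$ and a union bound over the $3^n$ level-$n$ vertices, the first Borel-Cantelli lemma yields that eventually $\We(\mu,t)\le 2t\Ma(\mu)$ for every $\mu$ at level $n$. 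Finally I would split an arbitrary buono subtree of $\Tcal_t$ into its vertices at levels $<n$ and those at levels $\ge n$. The former number at most a constant times $8^{n/2}$, since a buono subtree has at most $3\cdot 8^{\lfloor k/2\rfloor}$ vertices at each level $k$ and these counts sum geometrically. The latter all lie in the branches hanging below the level-$n$ set $C$, hence are bounded by $\sum_{\nu\in C}\aleph(\nu,t)=\sum_{\nu\in C}(3\We(\nu,t)+1)\le 6t\,\Ma(C)+|C|\le 6t\kappa^{-n/2}+8^{n/2}$. Adding the two contributions gives the claimed bound $K(8^{n/2}+t\kappa^{-n/2})$.

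The step I expect to require the most care is the identification in the first paragraph: verifying that the grand-offspring masses $A_iB_j$ genuinely furnish an i.i.d.\ splitting structure on $\Tcal^{(2)}$ meeting the precise hypotheses of Section~\ref{sec:preliminaries}, so that Lemma~\ref{ABb} applies as a black box with parameters $(d',r')=(9,8)$. Everything downstream is routine: the mass lower bound uses the original tree (parameters $(d,r)=(3,\cdot)$) through Lemma~\ref{productbetaproperties}, the mass upper bound uses $\Tcal^{(2)}$, and the conversion from mass to vertex count is the same bookkeeping as in Theorem~\ref{mainmain} via the relation $\aleph(\nu,t)=3\We(\nu,t)+1$.
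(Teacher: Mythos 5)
Your proposal is correct and follows essentially the same route as the paper: your passage to the $9$-ary tree of even levels with masses $B_\mu B_{\parent{\mu}}$ (so that buono subtrees become $8$-ary subtrees and Lemma~\ref{ABb} applies with $(d',r')=(9,8)$, with the $\Upsilon$ of~\eqref{def_upsilonfirst} matching~\eqref{def_upsilonA}) is exactly the paper's Lemma~\ref{ABb1A}, and your Chernoff/Borel--Cantelli tail estimate over the $3^n$ vertices at level $n$ of the original tree mirrors the paper's proof of Theorem~\ref{mainmainA}. The only difference is that you make explicit the final bookkeeping (splitting a buono subtree at level $n$, bounding the shallow part by a geometric sum of level sizes $3\cdot 8^{\lfloor k/2\rfloor}$, and the deep part via $\aleph(\nu,t)=3\We(\nu,t)+1$ together with $\Ma(C)\le\kappa^{-n/2}$), which the paper absorbs into the unspecified constant $K$.
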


We show how this theorem implies Theorem~\ref{thm:longest_upper}.

\begin{proof}[Proof of Theorem~\ref{thm:longest_upper}.]
Following the proof of Theorem~\ref{thm:maximalsubtree},  
we can find positive constants $\tau, \kappa, \lambda$ satisfying the conditions of Theorem~\ref{mainmainA}, with $\kappa>1$ and  $\tau > 8$.
Choose any $\delta \in 
\left(\max\{1 - \log(\kappa)/(2\log \tau),\log (8) / 2 \log \tau\},1\right)$.

%Let $n$ be an even positive integer such that $m^{n-2} < t \le m^{n}$.

Let $P$ be a path in $\R$ and let $R(P)$ denote the set of vertices $\node{\triangle}$ of $\Tcal_{t}$ such that $I(\triangle)$ contains some vertex of $P$.
By Lemma~\ref{lem:9children},
$R(P)$ induces a buono subtree of $\Tcal_{t}$.
Hence, using Theorem~\ref{mainmainA} for the second inequality, eventually for large $t$ we have
$$
|V(P)|
\le
3 + |R(P)|
\le 3 + K\left(8^{n/2} +  t \kappa^{-n/2} \right)
< t^{\delta}\:,
$$
%for some constant $K$, 
as required.
\end{proof}
 
%For each vertex $\nu \in \$ define $T(\nu)$ as follows. If  $\nu \notin \Tcal^{\ssup d}_{k-1}$, but $\nu \in \Tcal^{\ssup d}_{k}$, we set $T(\nu) = k$. 

%\abas
{The rest of this section is devoted to the proof of Theorem~\ref{mainmainA}.}
For each time $t$ and   vertex $\nu$ of $\Tcal_t$, let  
$ \aleph(\nu, t)$ denote the number of  vertices $\mu$ of $\Tcal_t$ such that $\nu \in [\r, \mu]$.
Let
$$ \We(\nu, t) \Def \frac{\aleph(\nu, t) -1}3 $$
if $\nu\in V(\Tcal_t)$, and $\We(\nu, t) =0$ if $\nu \notin V(\Tcal_t)$. 

By Lemma~\ref{Nk1},
there exist random variables $\{B_\nu\}_{\nu\in V(\Tcal)}$, 
% distributed as Beta$(1/2, 1)$, 
such that for any positive integer $t$ and any $\nu\in V(\Tcal)$,
 $\We(\nu, t)$ is stochastically dominated by a  Binomial$\left(t, \prod_{\s \in [\r, \nu]} B_{\s}\right)$. 
Moreover, $B_\r=1$ and for all $\nu \in V(\Tcal)$, 
the joint vectors $(B_{\nu 1}, B_{\nu 2}, B_{\nu 3})$ are independent and distributed as Dirichlet $(1/2, 1/2, 1/2)$.
Define $ \Ma(\nu) \Def \prod_{\s \in [\r, \nu]} B_{\s}$. 

Denote by
 $\Bcal_{n}$ the collection of subsets of $V(\Tcal)$ at level $n$, with the property that they belong to the same buono  subtree.  Note that each element of $\Bcal_{2n}$ has at most $8^n$ vertices. 
\begin{lemma}\label{ABb1A} 
Let $\lambda, \kappa$ be positive constants satisfying \eqref{kappa_cond_rans}.
Eventually, for large $n$,
$$ \max_{C \in \Bcal_{2n}}  \Ma(C) \le \kappa^{-n}.$$
\end{lemma}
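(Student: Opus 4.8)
The plan is to follow the same skeleton used for Lemma~\ref{ABb}, namely reduce the statement to a first-moment/Borel--Cantelli argument via an adjusted mass that is bounded by $1$ on every buono subtree. The key structural fact to exploit is the definition of $\Upsilon$ in \eqref{def_upsilonA}: because a buono vertex $\node{\triangle}$ retains at most eight of its nine grand-offspring, if we look two levels down in the $3$-ary tree, the mass $\Ma(\cdot)$ of the surviving grand-offspring is reduced by at least a factor $(1-\Upsilon)$. Concretely, for a vertex $\nu$ with grand-offspring indexed by the pairs $(i,j)$ (child $i$, then grandchild $j$), the mass of $\nu$'s grandchild through edges with Dirichlet weights $A_i$ and $B_j$ is $A_iB_j\,\Ma(\nu)$, and dropping the single heaviest such grand-offspring removes at least $\Upsilon\,\Ma(\nu)$. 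Hence the total mass of any eight retained grand-offspring is at most $(1-\Upsilon_\nu)\Ma(\nu)$, exactly the two-level analogue of the one-level inequality $\Ma(\nu_{\rm off})\le(1-\Upsilon_\nu)\Ma(\nu)$ used in Lemma~\ref{adjm}.

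First I would define, for each vertex $\nu$ at an even level $2m$, an adjusted mass $\MMa(\nu)$ that multiplies $\Ma(\nu)$ by $\prod (1-\Upsilon_\s)^{-1}$ over the grandparent-chain $\s\in\{\r,\text{gp}(\nu),\text{gp}^2(\nu),\dots\}$, i.e.\ over the path through every second ancestor. Here the $\Upsilon_\s$ attached to an even-level vertex $\s$ is built from the four Dirichlet vectors governing the step from $\s$ to its grand-offspring, matching \eqref{def_upsilonA}. With this definition, summing $\MMa$ over the grand-offspring retained in a buono subtree is bounded by $\MMa$ of their common grandparent, so iterating gives $\MMa(C)\le\MMa(\r)=1$ for every $C\in\Bcal_{2n}$; this is the direct analogue of Lemma~\ref{adjm}. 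Second, I would introduce the event that $\prod(1-\Upsilon_\s)^{-1}\ge\kappa^{n}$ over such a grandparent-chain of length $n$ for every even-level-$2n$ vertex, and show via Markov and Borel--Cantelli—using hypothesis \eqref{kappa_cond_rans} with the factor $9=3^2$ counting the $9^n$ vertices at level $2n$—that this event holds eventually. This is exactly Lemma~\ref{lem-e-h} with $d^n$ replaced by $9^n$ and the product taken over $n$ grandparent-steps, so that $9\,\kappa^\lambda\,\expected{(1-\Upsilon)^\lambda}<1$ makes the series $\sum 9^n(\kappa^\lambda\expected{(1-\Upsilon)^\lambda})^n$ summable.

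Combining the two pieces finishes the proof just as in Lemma~\ref{ABb}: once $n$ exceeds the almost surely finite threshold, every $C\in\Bcal_{2n}$ satisfies
$$
\Ma(C)\le\kappa^{-n}\,\MMa(C)\le\kappa^{-n}.
$$
The main obstacle I anticipate is verifying the two-level mass-contraction inequality cleanly, since unlike the one-level case of Lemma~\ref{adjm} the nine grand-offspring masses are products $A_iB_j$ and I must argue that retaining any eight of them loses at least the minimum single term $\Upsilon_\nu$; one has to check that the definition \eqref{def_upsilonA} as a minimum over all $(i,j)$ genuinely dominates the largest omitted grand-offspring regardless of which one the buono structure discards, and that the four governing Dirichlet vectors are independent across distinct even-level vertices (which follows from the independence in Lemma~\ref{Nk1}, but must be correctly bookkept across the grandparent-chain). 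A secondary, more routine point is the parity bookkeeping—working only on even levels and indexing $\Bcal_{2n}$ by $n$ grandparent-steps—so that the exponents of $\kappa$ and the counting factor $9^n$ line up with the statement $\Ma(C)\le\kappa^{-n}$.
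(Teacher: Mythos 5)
Your proposal is correct and is essentially the paper's argument: the paper simply packages your two-level bookkeeping as a reduction, building the collapsed $9$-ary tree $\Tcal'$ on the even levels of $\Tcal$ with weights $X_\mu = B_\mu B_{\parent{\mu}}$, observing that buono subtrees of $\Tcal$ become $8$-ary subtrees of $\Tcal'$, and then invoking Lemma~\ref{ABb} (with $d=9$, $r=8$) as a black box rather than re-deriving its adjusted-mass and Borel--Cantelli steps as you do. The structural facts you flag as obstacles are exactly what make that reduction work: the nine actual grand-offspring weights sum to $1$, the minimum in \eqref{def_upsilonA} over all $27$ pairs is a lower bound on the weight of any discarded grand-offspring, and the Dirichlet data attached to distinct even-level vertices are disjoint, hence independent.
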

\begin{proof}
Let  $\Tcal'$ be an  infinite rooted 9-ary tree obtained from $\Tcal$ as follows. The vertices of $\Tcal'$ are the vertices of $\Tcal$ at even levels. A vertex $\mu$ is an offspring of $\nu$ in $\Tcal'$ if $\mu$ is a grand-offspring of $\nu$ in $\Tcal$.  To each vertex $\mu$ assign the random variable $X_\mu = B_\mu B_{\parent{\mu}}$. Buono subtrees of $\Tcal$ are translated into 8-ary  subtrees of $\Tcal'$. 
Using Lemma~\ref{ABb} concludes the proof.
\end{proof}

We now prove Theorem~\ref{mainmainA}.
The proof is similar to that of Theorem~\ref{mainmain}.

\begin{proof}[Proof of Theorem~\ref{mainmainA}.]
Recall that $\We(\nu, t)$ is stochastically dominated by a Binomial (t,$\Ma(\nu)$). 
Chernoff bound for binomials implies (see, e.g.,~\cite[Theorem~2.3(b)]{concentration})
$$
\prob{\We(\nu,t) \ge 2 t \Ma(\nu) \:|\: \Ma(\nu) \ge q}
\le \exp(-tq)
$$
for  every positive $q$.

Since $\tau$ satisfies \eqref{m_cond_rans}, there exists $\tau_1<\tau$ satisfying
\begin{equation}
3e \log \tau_1  < 2\sqrt {\tau_1} \:. \label{m1_cond_rans}
\end{equation}
Let $\beta = 1 / \tau_1$.
By Lemma~\ref{productbetaproperties}, for any vertex $\nu$ at level $n$ we have
$$
\bbP(\Ma(\nu)  <  \beta ^{n}) \le 
\left(
\frac{e\log(1/\beta)\sqrt{\beta}}{2}
\right)^n \:.
$$
Note that \eqref{m1_cond_rans} implies that the term in brackets is a constant smaller than $1/3$.

We have 
\begin{equation*}
\begin{aligned}
 \bbP\Big(\bigcup_{\mu \colon |\mu|=n}   \big\{\We(\mu, t) \ge   2 t \Ma(\mu) \big\}\Big) 
 &\le  3^{n} \bbP\big( \We(\nu , t) \ge 2 t \Ma(\nu) )\\
 &\le  3^n\bbP(\Ma(\nu)  <  \beta ^{n}) + 
 3^{n} \exp\left(-t\beta^n\right). \\
%&\le 
%\left(
%\frac{ed\log(1/\beta)\beta^{1/(d-1)}}{d-1}
%\right)^n
% + 
% d^{n} \exp\Big\{- m^n J \big(2 \beta^n, \beta^n\big)\Big\}. 
\end{aligned}
\end{equation*}
The last expression is summable in $n$, 
as $t^{1/n}\beta \ge\tau\beta$, and $\tau\beta$ is a constant larger than 1.
%\eqref{betacond} and \eqref{etacond}. 
By the first Borel-Cantelli lemma and Lemma~\ref{ABb1A}, 
there exists a constant $K$ such that eventually for large $t$, 
the largest  buono subtree of $\Tcal_t$ has at most
$K\left( 8^{n/2} +  t \kappa^{-n/2} \right)$
vertices.
\end{proof}

\section{Appendix: Explicit bounds}
\label{sec:appendix}

\subsection{Explicit bound for Theorem~\ref{thm:maximalsubtree}}
\label{sec:explicitrary}

%In this section, we will use the inequalities
%\begin{equation}
%\label{gamma_properties}
%x^{1-s} \le \Gamma(x+1)/\Gamma(x+s) \le (x+1)^{1-s}
%\end{equation}
%valid for all $x\ge 0$ and all $s\in[0,1]$.
%For this inequality  see,  for example, \cite{LAF84}.
In this section we prove an explicit version of Theorem~\ref{thm:maximalsubtree}.
For the proof we will need 
the following inequalities (see, e.g., Laforgia~\cite[Equations (2.2) and (2.3)]{LAF84}),
valid for all $p,q>0$ and $0<\sigma<1<\iota<2$:
\begin{equation}
\left(p + \iota/2\right)^{\iota-1} <
\frac{\Gamma(p+\iota)}{\Gamma(p+1)}, \mathrm{and\ \quad}
\frac{\Gamma(q+\sigma)}{\Gamma(q+1)} <
\left(q+\sigma/2\right)^{\sigma - 1} \:.
\label{laforgia}
\end{equation}

\begin{theorem}
%\label{thm:maximalsubtree}
Let  $r$ and $d$ be  fixed positive integers with $r<d$,
and let $S_t$ denote the size of the largest $r$-ary subtree of 
a random $d$-ary recursive tree at time $t$. 
Let $$\delta = 1 - \frac{d-r}{ed^{2d}\log \left(11 d \log d \right)} \:.$$
Then   eventually  $S_t < t^{\delta}$.
\end{theorem}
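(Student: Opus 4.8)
The plan is to deduce this explicit statement directly from Theorem~\ref{mainmain} by exhibiting a concrete triple $(\tau,\kappa,\lambda)$, exactly as Theorem~\ref{thm:maximalsubtree} was deduced from Theorem~\ref{mainmain}. Guided by the form of $\delta$, I would take $\tau=(11d\log d)^{d-1}$, so that $\tau^{1/(d-1)}=11d\log d$ and $\log\tau=(d-1)\log(11d\log d)$. Condition \eqref{m_cond} then reduces, after dividing by $d(d-1)$, to $e\log(11d\log d)<11\log d$, which I would check holds for every integer $d\ge2$ (it is tightest at $d=2$, where $e\log(15.25)\approx7.41<7.62\approx11\log2$, and the gap grows since $11-e>0$ dominates the logarithmic corrections). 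Granting \eqref{kappa_cond} for some $\kappa>1$, Theorem~\ref{mainmain} yields $S_t\le K(r^{n}+t\kappa^{-n})$ with $n=\lfloor\log t/\log\tau\rfloor$; since $r^{n}\le t^{\log r/\log\tau}$ and $t\kappa^{-n}\le\kappa\,t^{1-\log\kappa/\log\tau}$, and $\log r/\log\tau$ is far below $\delta$, it suffices to produce $\kappa>1$ with $\log\kappa/\log\tau>1-\delta$. Because $(1-\delta)\log\tau=\tfrac{(d-r)(d-1)}{e\,d^{2d}}$, the whole problem collapses to satisfying \eqref{kappa_cond} with
$$\log\kappa\ >\ \frac{(d-1)(d-r)}{e\,d^{2d}}.$$

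The core step is an explicit upper bound on $\expected{(1-\Upsilon)^{\lambda}}$, where $\Upsilon$ is as in \eqref{def_upsilon}. First I would control the lower tail of $\Upsilon$: since $\Upsilon<s$ means some $(d-r)$-subset of the coordinates sums to less than $s$, a union bound over the $\binom{d}{d-r}=\binom{d}{r}$ subsets, combined with the Dirichlet aggregation property that each such partial sum is $\mathrm{Beta}\!\big(\tfrac{d-r}{d-1},\tfrac{r}{d-1}\big)$, gives for $0<s\le(d-r)/d$
$$\prob{\Upsilon<s}\ \le\ \binom{d}{r}\,\frac{\Gamma\!\big(\tfrac{d}{d-1}\big)}{\Gamma\!\big(\tfrac{d-r}{d-1}\big)\Gamma\!\big(\tfrac{r}{d-1}\big)}\,(1-s)^{\frac{r}{d-1}-1}\,\frac{s^{\theta}}{\theta},\qquad \theta:=\frac{d-r}{d-1}.$$
Writing $C$ for the constant produced by this tail bound, I would integrate $(1-s)^{\lambda}$ (equivalently use $(1-\Upsilon)^{\lambda}\le e^{-\lambda\Upsilon}$) and apply the first inequality in \eqref{laforgia} to estimate $\Gamma(\lambda+1)/\Gamma(\lambda+1+\theta)\le\lambda^{-\theta}$, obtaining $\expected{(1-\Upsilon)^{\lambda}}\le C\lambda^{-\theta}$ up to a term of order $(r/d)^{\lambda}$ that is negligible for the $\lambda$ we use. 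The Gamma factors in $C$ I would bound by the second inequality in \eqref{laforgia} (or by $\Gamma(x)\ge\Gamma(x+1)/x$ on $(0,1]$), which makes the $r$- and $(d-r)$-dependent factors cancel so that $C$ is essentially $\binom{d}{r}$ times the explicit ratio $\Gamma(\tfrac{d}{d-1})/[\Gamma(\tfrac{d-r}{d-1})\Gamma(\tfrac{r}{d-1})]$.

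Given $\expected{(1-\Upsilon)^{\lambda}}\le C\lambda^{-\theta}$, I would choose $\lambda$ to maximize $\tfrac1\lambda\log\tfrac1{d\,\expected{(1-\Upsilon)^{\lambda}}}$. The optimum sits at $\lambda^{*}=e(dC)^{1/\theta}$ and yields an admissible $\kappa>1$ with $\log\kappa=\theta/\big(e(dC)^{1/\theta}\big)$, at which point \eqref{kappa_cond} holds (strictly, thanks to the strictness in \eqref{laforgia} and $\Gamma(\theta+1)<1$). The required lower bound on $\log\kappa$ then reduces to the single inequality
$$\frac{d-1}{d-r}\,\log(dC)\ \le\ 2d\log d-2\log(d-1),$$
which I would verify for all integers $d\ge2$ and $1\le r\le d-1$.

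The main obstacle is precisely this verification in the regime $r\to d$, i.e. $d-r$ small and $\theta=(d-r)/(d-1)$ tiny: there $(1-\Upsilon)^{\lambda}$ decays only polynomially slowly in $\lambda$, and the prefactor $1/\theta=(d-1)/(d-r)$ multiplies every additive constant hidden in $\log(dC)$. The binding case is $r=d-1$, where $b=r/(d-1)=1$ forces $(1-s)^{b-1}\equiv1$ and the Gamma ratio equals exactly $1/(d-1)$, so $C=d$ and the inequality becomes $2(d-1)\log d\le2d\log d-2\log(d-1)$, i.e.\ $2\log\tfrac{d-1}{d}\le0$, which holds but with margin $2\log\tfrac{d}{d-1}\to0$ as $d\to\infty$. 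Consequently the constants $e$, $2d$, and $11$ in $\delta$ are essentially forced, and the delicate point of the whole proof is that no step may be carried out wastefully near $r=d-1$: in particular one must keep the exact value of the Gamma ratio and the factor $(1-s)^{b-1}$, rather than bounding the latter by $d/r$ or replacing $\binom{d}{r}$ by $2^{d}$, since either crude substitution already destroys the inequality uniformly in $d$.
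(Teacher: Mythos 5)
Your proposal is correct in outline and reaches the stated $\delta$, but its core estimate is genuinely different from the paper's, so a comparison is worthwhile. Both arguments share the same skeleton: take $\tau$ at (or just above) $(11d\log d)^{d-1}$, check \eqref{m_cond} via $e\log(11d\log d)<11\log d$, and reduce everything to exhibiting $\kappa>1$ satisfying \eqref{kappa_cond} with $\log\kappa>(d-1)(d-r)/(ed^{2d})$. The paper then bounds the lower tail of $\Upsilon$ crudely: if the $d-r$ smallest coordinates sum to at most $\eps$, then some single coordinate is at most $\eps/(d-r)$, whence $\prob{\Upsilon\le\eps}\le d\left(\eps/(d-r)\right)^{1/(d-1)}$ using one Beta$(1/(d-1),1)$ marginal. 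It then simply declares $\lambda=ed^{2d-2}/(d-r)$ and $\kappa=\exp\left(1/(\lambda(d-1))\right)$; after the same Laforgia step \eqref{laforgia}, condition \eqref{kappa_cond} holds because $d^2\kappa^{\lambda}(\lambda(d-r))^{-1/(d-1)}=1$ identically, with no case analysis in $(d,r)$ whatsoever. (Note that the paper's seemingly magic $\lambda$ is exactly the maximizer of $\lambda\mapsto\lambda^{-1}\log\bigl(\lambda^{1/(d-1)}(d-r)^{1/(d-1)}d^{-2}\bigr)$, so the optimization you perform explicitly is implicitly there too.) You instead use Dirichlet aggregation and a union bound over the $\binom{d}{r}$ subsets to get the sharper tail exponent $\theta=(d-r)/(d-1)$ in place of $1/(d-1)$, and then optimize $\lambda$. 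This buys a strictly better $\kappa$ whenever $d-r\ge2$, but that gain is not needed for the stated $\delta$, and it costs you a verification the paper never faces: your inequality $\frac{d-1}{d-r}\log(dC)\le 2d\log d-2\log(d-1)$ must hold for all $1\le r\le d-1$ with the exact constant $C$, and your write-up checks only the binding case $r=d-1$.

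That binding-case analysis is right ($C=d$, margin $2\log\frac{d}{d-1}$, so any bounded multiplicative loss in $C$ is fatal), and the inequality does appear to hold for intermediate $r$ — the Gamma ratio contributes a factor of order $\theta$ that cancels your $1/\theta$, after which $\binom{d}{d-r}\le d^{\,d-r}/(d-r)!$ leaves ample slack — but this uniform-in-$(d,r)$ verification is real work that you assert rather than perform, and it is precisely the chase that the paper's cruder tail bound plus explicit $(\lambda,\kappa)$ is engineered to avoid. Two smaller repairs you would need: at $r=1$ your strictness source $\Gamma(\theta+1)<1$ fails, since there $\theta=1$ and $\Gamma(2)=1$ (the substitute is $\Gamma(\lambda+1)/\Gamma(\lambda+2)=1/(\lambda+1)<\lambda^{-1}$, as Laforgia requires $\iota<2$ strictly); and the $(r/d)^{\lambda}$ correction caused by restricting your tail bound to $s\le(d-r)/d$ must be compared against the strictness margin $1-\Gamma(1+\theta)=\Theta(\theta)$, which indeed works out for $\lambda^{*}=e(dC)^{1/\theta}$ but should be stated, since that margin vanishes as $r\to d-1$.
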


\begin{proof}
In view of the proof of Theorem~\ref{thm:maximalsubtree},
it suffices to show there exist positive constants $\tau,\kappa,\lambda$ satisfying the following inequalities:
\begin{align}
e \: d \: \log \tau & < (d-1) \tau^{1/(d-1)}\:, \label{m_cond_explicit}\\
d \: \kappa^{\lambda  } \: \expected{\left(1-  \Upsilon \right)^{\lambda}} & < 1 \:, \label{kappa_cond_explicit} \\
\log r / \log \tau < 1 - \log \kappa / \log \tau & < \delta \:, \label{delta_cond_explicit}
\end{align}
where $\Upsilon$ is defined in \eqref{def_upsilon}.

Set 
$$\lambda = e d^{2d-2}/(d-r),\quad \mathrm{and}\quad\kappa = \exp\left(\frac{1}{\lambda(d-1)}\right) \:.$$
Let $B$ be a Beta($1/(d-1),1$) random variable.

We have
$$\prob{\Upsilon \le \eps}
\le d \prob{B \le \frac{\eps}{d-r}}
= d \left(\frac{\eps}{d-r}\right)^{1/(d-1)} \:.$$
Therefore,
\begin{align*}
\expected{\left(1- \Upsilon \right)^{\lambda}} 
& = \int_{0}^{1}
\prob{\left(1- \Upsilon \right)^{\lambda} \ge x} d x \\
& = \int_{0}^{1}
\prob{\Upsilon \le 1 - x^{1/\lambda}} d x \\
& \le 
\int_{0}^{1}
d \left(\frac{1 - x^{1/\lambda}}{d-r}\right)^{1/(d-1)} dx\:.
\end{align*}
Using the change of variables $y = x^{1/\lambda}$, we find
$$
\int_{0}^{1}
\left({1 - x^{1/\lambda}}\right)^{1/(d-1)} d x 
= \frac{\Gamma(d/(d-1))\Gamma(\lambda+1)}{\Gamma(\lambda+d/(d-1))} 
<  \lambda^{-1/(d-1)} \:,
$$
where we have used the inequalities in~\eqref{laforgia}
with $p=\lambda,\iota=d/(d-1),q=1,$ and $\sigma=1/(d-1)$.
%\eqref{gamma_properties} with $x=\lambda+1/(d-1)$ and $s=(d-2)/(d-1)$.
%Hence,
%\begin{align*}
%\expected{\left(1- \Upsilon \right)^{\lambda}} 
%& < \frac{d}{(d-r)^{1/(d-1)}} 
%\: \lambda^{-1/(d-1)} \:.
%\end{align*}
Thus,
\begin{align*}
d \: \kappa^{\lambda} \: \expected{\left(1-  \Upsilon(r) \right)^{\lambda}} 
& < 
\frac{d^2 \kappa^{\lambda} }{(\lambda(d-r))^{1/(d-1)}} 
%e d^{2d-2}/(d-r)
%D\kappa^{\lambda} \lambda^{-1/(d-1)}=
=1 \:,
\end{align*}
so \eqref{kappa_cond_explicit} holds.

Let $\tau_0=(11 d \log d)^{d-1}$ and notice that
$ 
e \: d \: \log \tau_0  < (d-1) \tau_0^{1/(d-1)}
$  
 since $d\ge2$.
Moreover,
$$
\frac{\log\kappa}{\log \tau_0} = 
\frac{d-r}{(d-1)^2 e \log \left(11 d \log d \right)d^{2(d-1)}}
>
\frac{d-r}{ed^{2d}\log \left(11 d \log d \right)} \:.
$$
%where we have used \eqref{gamma_properties} with $x=1/(d-1)$ and $s=1-x$.
Choose $\tau$ larger than $\tau_0$ such that
$$
\frac{\log\kappa}{\log \tau} >
\frac{d-r}{ed^{2d}\log \left(11 d \log d \right)} \:.
$$
We also have $\log r + \log \kappa < \log \tau$.
Thus \eqref{m_cond_explicit} and \eqref{delta_cond_explicit} hold as well, and the proof is complete.
\end{proof}

\subsection{Explicit bound for Theorem~\ref{thm:longest_upper}}
\label{sec:explicit_rans}
In this section we provide an explicit value for $\delta$ in Theorem~\ref{thm:longest_upper}.
For $\lambda>1$, define
%$$g(\lambda)=\frac{9\sqrt{\pi}\lambda}{4(\lambda-1)^{3/2}}\big(\log (\lambda-1)+\log 4+\gamma+\lambda e^{1-\lambda}\big) \:,$$
%where  $\gamma\approx 0.577$ is Euler's constant.
$$g(\lambda)=\frac{9\lambda}{2(\lambda-1)^{3/2}}
\Big(\sqrt{\pi} + \sqrt{\pi} \log \left(\lambda-1\right)/2
+ 4/9\Big) \:.$$

\begin{lemma}
\label{productbetapropertiesA}
Let $\Upsilon$ be defined as \eqref{def_upsilonA}, and let $\lambda>1$.
Then $\expected{\left(1- \Upsilon \right)^{\lambda}}
< g(\lambda)$.
\end{lemma}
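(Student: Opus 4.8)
The plan is to bound $\expected{(1-\Upsilon)^\lambda}$ directly via the tail-integral formula $\expected{(1-\Upsilon)^\lambda}=\int_0^1 \prob{\Upsilon \le 1-x^{1/\lambda}}\,dx$, exactly as in the explicit proof for Theorem~\ref{thm:maximalsubtree}. First I would control the lower tail of $\Upsilon$. Since $\Upsilon = \min\{A_iB_j : 1\le i\le 3,\,1\le j\le 9\}$ is a minimum over $27$ products of a coordinate of one Dirichlet$(1/2,1/2,1/2)$ vector and a coordinate of another, a union bound gives $\prob{\Upsilon \le \eps} \le 27\,\prob{A_1 B_1 \le \eps}$. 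Each of $A_1,B_1$ is marginally Beta$(1/2,1)$, whose distribution function is $\prob{A_1\le s}=\sqrt{s}$ on $(0,1)$. So I would compute $\prob{A_1B_1\le \eps}$ by conditioning on $A_1=a$: the event needs $B_1\le \eps/a$, contributing $\min\{1,\sqrt{\eps/a}\}$, and integrating the Beta$(1/2,1)$ density $\tfrac12 a^{-1/2}$ against this. Splitting the integral at $a=\eps$ yields a closed form of the shape $\sqrt{\eps}\,\bigl(1 + \tfrac12\log(1/\eps)\bigr)$ (up to the leading constant), i.e. $\prob{A_1B_1\le\eps}\le \sqrt{\eps}\,(1-\tfrac12\log\eps)$.

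Next I would substitute $\eps = 1-x^{1/\lambda}$ into the tail bound and integrate. Writing $y=x^{1/\lambda}$ so that $dx = \lambda y^{\lambda-1}\,dy$, the integral becomes a constant multiple of $\int_0^1 \sqrt{1-y}\,\bigl(1-\tfrac12\log(1-y)\bigr)\,\lambda y^{\lambda-1}\,dy$. The dominant contribution comes from $y$ near $1$ (where $1-y$ is small and $y^{\lambda-1}\approx 1$), so I expect this to evaluate in terms of Beta functions: $\int_0^1 y^{\lambda-1}(1-y)^{1/2}\,dy = B(\lambda,3/2)=\Gamma(\lambda)\Gamma(3/2)/\Gamma(\lambda+3/2)$ and a logarithmic variant $\int_0^1 y^{\lambda-1}(1-y)^{1/2}\log(1-y)\,dy$, which is $\partial_\beta B(\lambda,\beta)$ at $\beta=3/2$ and produces the $\log(\lambda-1)$ term. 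Using the asymptotic $\Gamma(\lambda)/\Gamma(\lambda+3/2)\approx \lambda^{-3/2}$ and $\Gamma(3/2)=\sqrt\pi/2$, together with the digamma estimate $\psi(\lambda+3/2)-\psi(3/2)\approx \log\lambda$, these Beta integrals assemble into precisely the three terms $\sqrt\pi$, $\sqrt\pi\log(\lambda-1)/2$, and the additive constant $4/9$, scaled by $9\lambda/\bigl(2(\lambda-1)^{3/2}\bigr)$, giving the claimed $g(\lambda)$.

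The main obstacle will be turning the two Beta-function integrals into clean, fully rigorous upper bounds matching the specific constants in $g(\lambda)$ — in particular, replacing $\lambda^{3/2}$ by $(\lambda-1)^{3/2}$ and bounding the digamma difference by $\log(\lambda-1)$ rather than by an asymptotic equivalent. The inequalities of Laforgia in~\eqref{laforgia} are the natural tool for the ratio $\Gamma(\lambda)/\Gamma(\lambda+3/2) = \bigl(\Gamma(\lambda)/\Gamma(\lambda+1/2)\bigr)\bigl(\Gamma(\lambda+1/2)/\Gamma(\lambda+3/2)\bigr)$, applied with shifted parameters; I would use them to pin down the $(\lambda-1)^{-3/2}$ factor with an explicit constant, and handle the logarithmic integral by the crude but effective bound $-\log(1-y)\le (1-y)^{-\theta}/\theta$ for a small $\theta>0$, converting it back into another Beta integral. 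The $27$ from the union bound combines with $\Gamma(3/2)$ to give the prefactor $9\lambda/(2(\lambda-1)^{3/2})$, and the additive $4/9$ absorbs the leftover lower-order pieces; verifying that all these inequalities point the correct way simultaneously is the delicate bookkeeping that the proof must carry out.
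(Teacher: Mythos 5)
Your overall strategy coincides with the paper's: the tail-integral formula $\expected{(1-\Upsilon)^\lambda}=\int_0^1\prob{\Upsilon\le 1-x^{1/\lambda}}\,\d x$, a union bound reducing to the product of two independent Beta$(1/2,1)$ variables, the exact computation $\prob{A_1B_1\le\eps}=\sqrt{\eps}\,\bigl(1+\tfrac12\log(1/\eps)\bigr)$ (which you set up correctly, exactly as in the paper), and a substitution to estimate the resulting integral. The genuine gap is in your union-bound constant. You take all $27$ pairs, so your tail bound is $\prob{\Upsilon\le\eps}\le 27\,\prob{A_1B_1\le\eps}=\tfrac{27}{2}\sqrt{\eps}\log(e^2/\eps)$, and carrying this through the substitution yields $3g(\lambda)$, not $g(\lambda)$: the prefactor $\tfrac{9\lambda}{2(\lambda-1)^{3/2}}$ in $g$ decomposes as (union-bound factor $9$) $\times$ (the $\tfrac12$ from the product-Beta tail) $\times$ (the Jacobian factor $\lambda/(\lambda-1)^{3/2}$), while $\Gamma(3/2)=\sqrt{\pi}/2$ only produces the $\sqrt{\pi}$ terms \emph{inside} the parentheses of $g$. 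So your closing claim that ``the $27$ combines with $\Gamma(3/2)$ to give the prefactor $9\lambda/(2(\lambda-1)^{3/2})$'' is arithmetically impossible; nothing cancels $27$ down to $9$. The paper's proof uses a $9$-term bound, $\prob{\Upsilon\le\eps}\le 9\,\prob{B_1B_2\le\eps}$, corresponding to the nine \emph{structured} products $A_iB_j$ in which $B_j$ ranges only over the grand-offspring block below offspring $i$ --- these are the only products that arise in the application (Lemma~\ref{ABb1A}, where $X_\mu=B_\mu B_{\parent{\mu}}$). Note that under the literal $27$-pair reading of \eqref{def_upsilonA} one has $\Upsilon=(\min_i A_i)(\min_j B_j)$, whose lower tail is asymptotically three times $9\,\prob{A_1B_1\le\eps}$, so no refinement of the union bound recovers the factor $9$ there; to reach $g(\lambda)$ you must work with the nine structured pairs (this mismatch between \eqref{def_upsilonA} and the factor $9$ is arguably a blemish of the paper itself, but your route provably cannot produce the stated constant).

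A secondary, lesser issue is the integration step. The paper substitutes $y=(\lambda-1)(1-x^{1/\lambda})$, applies $\bigl(1-\tfrac{y}{\lambda-1}\bigr)^{\lambda-1}\le e^{-y}$, and reduces everything to the two elementary integrals $\int_0^\infty\sqrt{y}\,e^{-y}\,\d y=\sqrt{\pi}/2$ and $\int_0^1\sqrt{y}\,\log(1/y)\,\d y=4/9$, which produce exactly the three terms of $g(\lambda)$. Your route via exact Beta-function values, Gamma-ratio (Laforgia/Gautschi) inequalities, and digamma estimates is viable in principle --- the exact value of the integral certainly lies below the paper's bound --- but the step you defer as ``delicate bookkeeping,'' namely converting $\Gamma(\lambda)/\Gamma(\lambda+3/2)\approx\lambda^{-3/2}$ and $\psi(\lambda+3/2)-\psi(3/2)\approx\log\lambda$ into inequalities landing below the specific terms $\sqrt{\pi}$, $\sqrt{\pi}\log(\lambda-1)/2$, and $4/9$ of $g(\lambda)$, is exactly where the work lies and is not carried out (and with the crude digamma bound it only goes through for $\lambda$ bounded away from $1$). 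As written, the proposal establishes at best $\expected{(1-\Upsilon)^\lambda}<3g(\lambda)$ modulo unverified estimates, which is not the lemma as stated.
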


\begin{proof}
Let $B_1$ and $B_2$ be independent Beta$(1/2,1)$ random variables.
The density function of each of $B_1$ and $B_2$ is $1/(2\sqrt{x})$ if $x\in(0,1)$ and 0 elsewhere, hence we have
\begin{align*}
\prob{B_1B_2\le \eps}
& = \int_0^{1} 
\left(\int_0^{\min\{1,\eps/x\}} \frac{1}{2\sqrt{y}}\mathrm{d}y\right)
\frac{1}{2\sqrt{x}}\mathrm{d}x 
%\\
%& =
%\int_0^{\eps} \left(\int_0^{1} 
%\frac{1}{2\sqrt{y}}\mathrm{d}y\right)
%\frac{1}{2\sqrt{x}}\mathrm{d}x
%+
%\int_{\eps}^{1} \left(\int_0^{\eps/x} \frac{1}{2\sqrt{y}}\mathrm{d}y\right)
%\frac{1}{2\sqrt{x}}\mathrm{d}x \\
%& =
%\int_0^{\eps} \sqrt1 \times
%\frac{1}{2\sqrt{x}}\mathrm{d}x
%+
%\int_{\eps}^{1} 
%\sqrt{\eps/x} \times
%\frac{1}{2\sqrt{x}}\mathrm{d}x \\
%& = \sqrt{\eps} + \frac{\sqrt{\eps}}{2}(\log 1 - \log \eps)
=\sqrt{\eps} (1 + \log(1/\eps)/2) \:.
\end{align*}

Thus 
\begin{align*}
\expected{\left(1- \Upsilon \right)^{\lambda}} 
& = \int_{0}^{1}
\prob{\left(1- \Upsilon \right)^{\lambda} \ge x} d x \\
& = \int_{0}^{1}
\prob{\Upsilon \le 1 - x^{1/\lambda}} d x \\
& \le 9 
\int_{0}^{1}
\prob{B_1 B_2 \le 1 - x^{1/\lambda}} d x \\
& = \frac{9}{2} 
\int_{0}^{1} \sqrt{1 - x^{1/\lambda}} \: \log \left(\frac{e^2} { 1 - x^{1/\lambda}}\right) dx\,. 
\end{align*}
With $y=(\lambda-1)(1-x^{1/\lambda})$,
%and using $1-y/(\lambda-1) < e^{-y/(\lambda-1)}$ when $y>0$, 
we find  
\begin{align*}
\expected{\left(1- \Upsilon \right)^{\lambda}}  
& 
\le \frac{9\lambda}{2(\lambda-1)^{3/2}}  \int_{0}^{\lambda-1} \sqrt{y} \: \log \left(\frac{e^2(\lambda-1)} {y}\right)
\left(1-\frac{y}{\lambda-1}\right)^{\lambda-1}
dy \\
& 
< \frac{9\lambda}{2(\lambda-1)^{3/2}}  \int_{0}^{\lambda-1} \sqrt{y} \: \log \left(\frac{e^2(\lambda-1)} {y}\right) e^{-y}
dy \:.
\end{align*}
We have 
$$
\int_{0}^{\infty} \sqrt{y} \: \log \left({e^2(\lambda-1)}\right) e^{-y} dy
= 
\log \left({e^2(\lambda-1)}\right) \sqrt{\pi}/2 \:,
$$
and
$$
\int_{0}^{\lambda-1} 
\sqrt{y} \: \log \left(1/y\right) e^{-y} dy
\le
\int_{0}^{1}
\sqrt{y} \: \log \left(1/y\right) e^{-y} dy
\le
\int_{0}^{1}
\sqrt{y} \: \log \left(1/y\right) dy
=4/9 \:,\\
$$
concluding the proof.
%The lemma follows by evaluating the last integral with the upper limit of integration replaced by  $\infty$, and taking into account the error of at most $\int_{\lambda-1}^\infty ye^{-y} dy = \lambda e^{1-\lambda}$.
\end{proof}

%\replace{Now}{} 
Set
$\lambda = 10^6$,  
$\kappa = (9g(\lambda))^{-1/\lambda}$,
$\tau = 720$, and $\delta  = 
1-4\times 10^{-8}$.
%\abas{We actually need $\kappa$ strictly less than$\Big( 9 \: \expected{\left(1-  \Upsilon \right)^{\lambda}}\Big)^{-1/\lambda}$, see \eqref{kappa_cond_rans}}
 It is easily verified that
${3e} \log \tau  <2\sqrt{\tau}$ 
 and 
 %$\delta    \in \big(1-\log(\kappa)/(2\log \tau),1\big)$.
$\delta>\max\{1 - \log(\kappa)/2\log \tau,\log (8) / 2 \log \tau\}$.
Moreover, Lemma~\ref{productbetapropertiesA}
implies that $9 \: \kappa^{\lambda} \: \expected{\left(1- \Upsilon \right)^{\lambda}} < 1$.
As in the proof of Theorem~\ref{thm:longest_upper},
we get that  $\mathcal{L}_t < t^{\delta}$ eventually.

\bibliographystyle{plain}
\bibliography{webgraph}
% ******************************************************************
% END OF DOCUMENT
% ******************************************************************
 
\end{document}